\providecommand{\U}[1]{\protect\rule{.1in}{.1in}}
\newtheorem{theorem}{Theorem}
\newtheorem{corollary}[theorem]{Corollary}
\newtheorem{definition}[theorem]{Definition}
\newtheorem{lemma}[theorem]{Lemma}
\newtheorem{proposition}[theorem]{Proposition}
\theoremstyle{remark}
\newtheorem{remark}{Remark}
\begin{document}

\title{Multivariate Alexander quandles, I. The module sequence of a link}
\author{Lorenzo Traldi\\Lafayette College\\Easton, PA 18042, USA\\traldil@lafayette.edu
}
\date{ }
\maketitle

\begin{abstract}
The multivariate Alexander module of a link $L$ has several subsets that admit quandle operations defined using the module operations. One of them, the fundamental multivariate Alexander quandle, determines the link module sequence of $L$.

\emph{Keywords}: Alexander module; link module sequence; quandle.

Mathematics Subject Classification 2010: 57M25
\end{abstract}

\section{Introduction}

A classical link in $\mathbb{S}^3$ is a union $L=K_1 \cup \dots \cup K_{\mu}$ of finitely many, pairwise disjoint embedded copies of $\mathbb{S}^1$. All our links are \emph{tame}; that is, the embedding of each component knot $K_i$ is piecewise smooth. Also, all our links are oriented. A \emph{regular projection} of a link $L$ in the plane has only finitely many singularities, all of which are transverse double points called \emph{crossings}. A \emph{link diagram} is obtained from a regular projection by removing two short segments from the underpassing arc at each crossing. The set of crossings of a diagram $D$ is denoted $C(D)$, and the set of connected components of $D$ is denoted $A(D)$; we refer to the elements of $A(D)$ as \emph{arcs} of $D$. The component function $\kappa:A(D) \to \{1,\dots,\mu\}$ is defined so that each arc $a \in A(D)$ is part of the image of $K_{\kappa(a)}$ in $D$.

Two links are \emph{equivalent} or \emph{of the same type} if there is an orientation-preserving autohomeomorphism of $\mathbb{S}^3$ that maps one link onto the other, preserving the link orientations. Equivalent links are represented by diagrams that are related to each other through the Reidemeister moves \cite{R}. 

The purpose of this paper is to provide a new connection between two kinds of structures that  provide link invariants, modules and quandles. Before describing this connection we recall some basic properties of these structures.

Quandles were introduced independently by Joyce \cite{J} and Matveev \cite{M}, and several different modifications of the idea have proven useful in knot theory. A comprehensive, elementary presentation is provided by Elhamdadi and Nelson \cite{EN}. 

\begin{definition}
A \emph{quandle} on a set $Q$ is specified by two binary operations, $\triangleright$ and $\triangleright^{-1}$, which satisfy the following formulas for all elements $x,y,z \in Q$.
\begin{align*}
& \mathrm{(Q1)} \quad x \triangleright x = x\\
& \mathrm{(Q2)} \quad (x \triangleright y) \triangleright^{-1} y =x = (x \triangleright^{-1} y) \triangleright y   \\
&  \mathrm{(Q3)} \quad (x \triangleright y) \triangleright z=(x \triangleright z) \triangleright (y \triangleright z)  
\end{align*}
\end{definition}

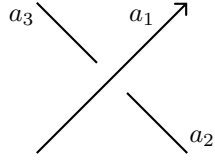
\begin{figure} [bht]
\centering
\begin{tikzpicture} [>=angle 90]
\draw [thick] [<-] (1,1) -- (-1,-1);
\draw [thick] (-1,1) -- (-.2,0.2);
\draw [thick] (0.2,-0.2) -- (1,-1);
\node at (0.4,0.8) {$a_1$};
\node at (-1.2,0.8) {$a_3$};
\node at (1.2,-0.8) {$a_2$};
\end{tikzpicture}
\caption{A crossing.}
\label{crossfig}
\end{figure}

If $D$ is a diagram of a link $L$ then the \emph{fundamental quandle} $Q(L)$ is the quandle generated by elements of $A(D)$, subject to the requirement that at each crossing of $D$ as indicated in Fig.\ \ref{crossfig}, the relations 
$a_2 \triangleright a_1 = a_3$ and $a_3 \triangleright^{-1} a_1 = a_2$ hold. (Note that $a_2$ is on the right side of $a_1$, and $a_3$ is on the left.) Elementary arguments using the Reidemeister moves show that different diagrams of the same link type give rise to isomorphic fundamental quandles \cite{EN, J, M}.

If $L=K_1 \cup \dots \cup K_{\mu}$ then the Alexander module $M_A(L)$ is a module over the ring $\Lambda_{\mu}=\mathbb{Z}[t_1^{\pm1},\dots,t_{\mu}^{\pm1}]$ of Laurent polynomials in the variables $t_1,\dots,t_{\mu}$, with integer coefficients. There are several ways to describe $M_A(L)$. One of the most familiar is the following presentation using generators and relations corresponding to the arcs and crossings of a diagram. 

Let $D$ be a diagram of an oriented link $L$. Let $\Lambda_{\mu}^{A(D)}$ and $\Lambda_{\mu}^{C(D)}$ be the free $\Lambda_{\mu}$-modules on the sets $A(D)$ and $C(D)$, and let $\rho_D:\Lambda_{\mu}^{C(D)} \to \Lambda_{\mu}^{A(D)}$ be the $\Lambda_{\mu}$-linear map with 
\[
\rho_D(c)=(1-t_{\kappa(a_2)})a_1+t_{\kappa(a_1)}a_2-a_3
\]
whenever $c \in C(D)$ is a crossing of $D$ as indicated in Fig.\ \ref{crossfig}. 

\begin{definition}
\label{almod}
If $D$ is a diagram of $L$,
then the \emph{Alexander module} $M_A(L)$ is the cokernel of $\rho_D$. That is, there is a surjection $\gamma_D:\Lambda_{\mu}^{A(D)} \to M_A(L)$ whose kernel is the image of $\rho_D$.
\end{definition}

The maps $\rho_D$ and $\gamma_D$ (named for ``relators'' and ``generators'') vary from one diagram to another, but up to isomorphism, $M_A(L)$ remains the same. An elementary proof of the invariance of $M_A(L)$ under the Reidemeister moves is given in Sec.\ \ref{secdef}.

The \emph{augmentation ideal} $I_{\mu}$  is the ideal of $\Lambda_{\mu}$ generated by $t_1-1,\dots,t_{\mu}-1$. If $L$ is any $\mu$-component link then there is a $\Lambda_{\mu}$-linear epimorphism $\phi_L:M_A(L) \to I_{\mu}$; if $D$ is a diagram of $L$ then $\phi_L(\gamma_D(a))=t_{\kappa(a)}-1$ $ \forall a \in A(D)$. This epimorphism appears in the \emph{link module sequence}
\begin{equation*}
0 \to \ker \phi_L \xrightarrow{\psi} M_A(L) \xrightarrow{\phi_L} I_{\mu} \to 0 \text{,}
\end{equation*}
where $\psi$ is inclusion. The module $\ker \phi_L$ is often called the \emph{Alexander invariant} of $L$. Link module sequences were introduced by Crowell \cite{C1, C2a, C2, C3, CS}. We refer to Crowell's papers and Hillman's book \cite[Chap.\ 4]{H} for a thorough discussion.


The usual idea of an isomorphism of exact sequences involves isomorphisms between the modules that appear in the sequences, which combine to form a commutative diagram. For link module sequences, Crowell \cite{C1} required that the isomorphism between the two copies of $I_{\mu}$ be the identity map. 

\begin{definition}
\label{Crowelleq} (\cite{C1})
If $L$ and $L'$ are two links with the same number of components, then the link module sequences of $L$ and $L'$ are \emph{equivalent} if there is a $\Lambda_{\mu}$-linear isomorphism $f:M_A(L) \to M_A(L')$ such that $\phi_L=\phi_{L'} f:M_A(L) \to I_{\mu}$.
\end{definition}
Note that it is not necessary to explicitly require an isomorphism between $\ker \phi_L$ and $\ker \phi_{L'}$; such an isomorphism can be obtained from $f$ by restriction.

\begin{definition}
Let $\Lambda=\mathbb{Z}[t^{\pm1}]$ be the ring of Laurent polynomials in the variable $t$, with integer coefficients. If $L$ is a link with a diagram $D$ then the \emph{reduced Alexander module} $M_A^{red}(L)$ is the $\Lambda$-module obtained by setting all $t_i=t$ in Definition \ref{almod}. The resulting surjection $\Lambda^{A(D)} \to M_A^{red}(L)$ is denoted $\gamma_D^{red}$. 
\end{definition}

If $D$ is a diagram of a link $L$ and $I$ is the ideal of $\Lambda$ generated by $t-1$ then there is a reduced link module sequence,
\begin{equation*}
0 \to \ker \phi_L^{red} \xrightarrow{\psi} M_A^{red}(L) \xrightarrow{\phi_L^{red}} I \to 0 \text{,}
\end{equation*}
with $\phi_L^{red}(\gamma_D^{red}(a))=t-1$ $\forall a \in A(D)$. The module $\ker \phi_L^{red}$ is the \emph {reduced Alexander invariant} of $L$. The principal ideal $I$ is a projective $\Lambda$-module -- it is isomorphic to $\Lambda$ -- so all reduced link module sequences split. Consequently there is little reason to work with both the reduced Alexander module and the reduced Alexander invariant; most references focus on one or the other.

It is an unsurprising fact that when we pass to reduced Alexander modules, setting all $t_i=t$ entails a significant loss of information. 

\begin{proposition}
\label{red}
The link module sequence is a strictly stronger invariant than the reduced link module sequence. That is:
\begin{enumerate}
    \item If two links have equivalent link module sequences, then they have equivalent reduced link module sequences.
    \item If two links have equivalent reduced link module sequences, then they might not have equivalent link module sequences.
\end{enumerate}
\end{proposition}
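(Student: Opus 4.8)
\emph{Proof proposal.} The plan for part (1) is to recognise the passage to reduced modules as base change along the ring homomorphism $\varepsilon\colon\Lambda_\mu\to\Lambda$ with $\varepsilon(t_i)=t$. Because $M_A(L)$ is the cokernel of $\rho_D$ and $-\otimes_{\Lambda_\mu}\Lambda$ is right exact, applying it to $\rho_D$ produces exactly the presentation obtained by setting all $t_i=t$, so $M_A^{red}(L)\cong M_A(L)\otimes_{\Lambda_\mu}\Lambda$ canonically, with $\gamma_D^{red}(a)$ corresponding to $\gamma_D(a)\otimes1$. The inclusion $I_\mu\hookrightarrow\Lambda_\mu$ followed by $\varepsilon$ induces a surjection $I_\mu\otimes_{\Lambda_\mu}\Lambda\to I$, and the composite $M_A^{red}(L)=M_A(L)\otimes_{\Lambda_\mu}\Lambda\xrightarrow{\phi_L\otimes1}I_\mu\otimes_{\Lambda_\mu}\Lambda\to I$ sends each $\gamma_D^{red}(a)$ to $t-1$, so by the characterising property of $\phi_L^{red}$ it equals $\phi_L^{red}$. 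Hence if $f\colon M_A(L)\to M_A(L')$ is a $\Lambda_\mu$-isomorphism with $\phi_L=\phi_{L'}f$, then $f\otimes1$ is a $\Lambda$-isomorphism $M_A^{red}(L)\to M_A^{red}(L')$, and post-composing $\phi_L\otimes1=(\phi_{L'}\otimes1)(f\otimes1)$ with $I_\mu\otimes_{\Lambda_\mu}\Lambda\to I$ yields $\phi_L^{red}=\phi_{L'}^{red}(f\otimes1)$; so the reduced link module sequences are equivalent.

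For part (2) the plan is to take a single oriented link $L=K_1\cup K_2$ and the link $L'=K_2\cup K_1$ obtained from it by interchanging the two component labels, and to show that $L$ and $L'$ have identical --- hence equivalent --- reduced link module sequences but inequivalent link module sequences. The first claim is immediate: the reduced Alexander module is built from a diagram using only the relators $(1-t)a_1+ta_2-a_3$ and the rule $\gamma_D^{red}(a)\mapsto t-1$, neither of which mentions the component function $\kappa$, so relabelling leaves the reduced presentation untouched and $M_A^{red}(L)=M_A^{red}(L')$, $\phi_L^{red}=\phi_{L'}^{red}$. For the second claim, relabelling replaces each relator $\rho_D(c)$ by its image under the ring automorphism $\sigma$ of $\Lambda_\mu$ interchanging $t_1$ and $t_2$, so $M_A(L')\cong\sigma_*M_A(L)$ and the elementary ideals of $M_A(L')$ are the $\sigma$-images of those of $M_A(L)$. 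An equivalence of link module sequences would give, a fortiori, a $\Lambda_\mu$-isomorphism $M_A(L)\cong M_A(L')$, hence equal elementary ideals; since the multivariate Alexander polynomial $\Delta_L$ is recoverable up to units from the elementary ideals of $M_A(L)$, this would force $\Delta_L(t_1,t_2)\doteq\Delta_L(t_2,t_1)$.

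So it suffices to choose $L$ whose multivariate Alexander polynomial is not symmetric in $t_1$ and $t_2$. I would take $K_2$ a trefoil and $K_1$ a meridian of $K_2$, so that $\mathrm{lk}(K_1,K_2)=1$; the Torres conditions then give $\Delta_L(t_1,1)\doteq\Delta_{K_1}(t_1)=1$ and $\Delta_L(1,t_2)\doteq\Delta_{K_2}(t_2)=t_2^2-t_2+1$. Because the symmetry $\Delta_L(a,b)\doteq\Delta_L(b,a)$ would force $\Delta_L(t_1,1)\doteq\Delta_L(1,t_1)$, this $\Delta_L$ is not symmetric, and $L,L'$ furnish the required example. (Alternatively, one can compute $M_A(L)$ from a small diagram of $L$ and exhibit directly a Fitting-ideal obstruction to $M_A(L)\cong\sigma_*M_A(L)$.)

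Part (1) is the routine half --- pure functoriality of base change, plus the identification of $\phi_L^{red}$ with the reduction of $\phi_L$. The work, and the point needing the most care, is in part (2): verifying that the chosen link really does have an asymmetric multivariate Alexander polynomial, i.e.\ computing $\Delta_L$ (or $M_A(L)$) for the trefoil-with-a-meridian correctly and citing the classical fact that, for $\mu\ge2$, the $\Lambda_\mu$-isomorphism type of $M_A(L)$ pins down $\Delta_L$ up to units.
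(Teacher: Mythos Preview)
Your argument for part~(1) matches the paper's: both apply the base-change functor $\Lambda\otimes_{\Lambda_\mu}-$ (the paper phrases it as ``setting all $t_i=t$ in the matrices'').

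For part~(2) you take a different and more economical route. The paper chooses two unrelated links --- the split union of the knot $9_{46}$ with an unknot, and Milnor's two-component link --- and simplifies their Alexander-module presentations from explicit diagrams to obtain direct-sum decompositions that are visibly non-isomorphic over $\Lambda_2$ (and remain so after swapping $t_1\leftrightarrow t_2$) but become isomorphic over $\Lambda$ with matching $\phi^{red}$ maps. Your relabeling trick avoids all diagram computation by reducing the question to asymmetry of $\Delta_L$ via the Torres conditions; the price is that you import two outside facts (Torres, and that the module $M_A(L)$ determines $\Delta_L$ up to units), whereas the paper's computation is self-contained. There is one substantive difference worth flagging: the paper's pair has inequivalent link module sequences \emph{even after arbitrary reindexing of components}, while your $L$ and $L'$ coincide once reindexing is allowed. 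Since the paper observes that $Q_A$ is insensitive to component indexing, your pair in fact has $Q_A(L)\cong Q_A(L')$; so your example proves Proposition~\ref{red} as stated, but the paper's stronger example is what is actually needed downstream for Corollary~\ref{cor}, which requires links distinguished by the multivariate quandle but not by the reduced one.
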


We guess that Proposition \ref{red}  is known, but we do not have a reference. A proof is given in Section \ref{secred}.

The traditional way to associate quandles to Alexander modules involves applying the following easily verified observation to either the reduced Alexander module or the reduced Alexander invariant \cite{EN, J, M}.

\begin{proposition}
\label{standardq}
If $M$ is a $\Lambda$-module, then there is a quandle structure on $M$ given by $x \triangleright y=tx+(1-t)y$ and $x \triangleright^{-1} y=t^{-1} \cdot (x+(t-1)y)$.
\end{proposition}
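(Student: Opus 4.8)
The plan is simply to verify the three quandle axioms (Q1)--(Q3) by direct substitution, using nothing about $M$ beyond the fact that it is a module over $\Lambda=\mathbb{Z}[t^{\pm1}]$; in particular, since $t$ is a unit in $\Lambda$, the element $t^{-1}x$ is well defined for every $x\in M$, so the proposed formula for $\triangleright^{-1}$ makes sense.

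Axiom (Q1) is immediate: $x\triangleright x=tx+(1-t)x=x$. For (Q2), I would expand the two composites. Substituting $x\triangleright y=tx+(1-t)y$ into the formula for $\triangleright^{-1}$ gives $(x\triangleright y)\triangleright^{-1}y=t^{-1}\bigl(tx+(1-t)y+(t-1)y\bigr)$, and the $y$-terms cancel, leaving $t^{-1}(tx)=x$. In the other direction, $(x\triangleright^{-1}y)\triangleright y=t\cdot t^{-1}\bigl(x+(t-1)y\bigr)+(1-t)y=x+(t-1)y+(1-t)y=x$, again after the $y$-terms cancel.

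For (Q3), expanding the left-hand side gives $(x\triangleright y)\triangleright z=t\bigl(tx+(1-t)y\bigr)+(1-t)z=t^2x+t(1-t)y+(1-t)z$, while the right-hand side gives $(x\triangleright z)\triangleright(y\triangleright z)=t\bigl(tx+(1-t)z\bigr)+(1-t)\bigl(ty+(1-t)z\bigr)=t^2x+t(1-t)y+\bigl(t(1-t)+(1-t)^2\bigr)z$. Since $t(1-t)+(1-t)^2=(1-t)\bigl(t+(1-t)\bigr)=1-t$, the two expressions coincide.

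The argument involves no real obstacle; it is a routine module computation, and the only point worth flagging is that it genuinely uses the invertibility of $t$ (both for $\triangleright^{-1}$ to be defined and for the cancellation in (Q2)), but no other special feature of $\Lambda$ or of $M$ --- the same formulas define a quandle on any module over any commutative ring in which the chosen element $t$ is a unit. The main care needed is simply keeping track of signs.
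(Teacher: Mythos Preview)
Your verification is correct in every detail. The paper does not actually supply a proof of this proposition: it introduces the statement as ``the following easily verified observation'' and cites \cite{EN, J, M}, so there is no argument to compare yours against; the direct axiom-by-axiom check you give is exactly the routine computation the phrase ``easily verified'' is pointing to.
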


The quandles described in Proposition \ref{standardq} are called \emph{Alexander quandles} in the literature. We refer to them as \emph{standard} Alexander quandles, to avoid confusion with the quandles described in Definition \ref{multiq} below.

\begin{proposition}
\label{multiq1}
Suppose $L$ is a link, and $\phi_L:M_A(L) \to I_{\mu}$ is the map appearing in the link module sequence of $L$. Then the quandle axioms \textup{(Q1)} and \textup{(Q3)} are satisfied by the operation on $M_A(L)$ defined by the formula \[x \triangleright y=(\phi_L(y)+1)x-\phi_L(x)y.\] 
\end{proposition}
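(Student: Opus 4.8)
The plan is to verify (Q1) and (Q3) by direct computation from the defining formula, using only one structural fact: $\phi_L$ is a homomorphism of $\Lambda_{\mu}$-modules, and $\Lambda_{\mu}$ is commutative. First observe that the operation is well defined, since $\phi_L(y)\in I_{\mu}\subseteq\Lambda_{\mu}$, so $(\phi_L(y)+1)x$ and $\phi_L(x)y$ are elements of $M_A(L)$ obtained via the module action. Axiom (Q1) is then immediate: $x\triangleright x=(\phi_L(x)+1)x-\phi_L(x)x=\bigl((\phi_L(x)+1)-\phi_L(x)\bigr)x=x$.

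The key preliminary step for (Q3) is to show that $\phi_L$ "sees through" $\triangleright$, namely $\phi_L(x\triangleright y)=\phi_L(x)$ for all $x,y\in M_A(L)$. Indeed, applying the $\Lambda_{\mu}$-linear map $\phi_L$ to $x\triangleright y=(\phi_L(y)+1)x-\phi_L(x)y$ gives $\phi_L(x\triangleright y)=(\phi_L(y)+1)\phi_L(x)-\phi_L(x)\phi_L(y)=\phi_L(x)$, where the cancellation uses that $\phi_L(x)$ and $\phi_L(y)$ commute in $\Lambda_{\mu}$. Consequently, in any iterated $\triangleright$-expression the $\phi_L$-value of the result equals the $\phi_L$-value of its leftmost argument.

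With this in hand, (Q3) is a short bookkeeping computation. Write $p=\phi_L(x)$, $q=\phi_L(y)$, $r=\phi_L(z)$. On the left-hand side, since $\phi_L(x\triangleright y)=p$ we get $(x\triangleright y)\triangleright z=(r+1)\bigl((q+1)x-py\bigr)-pz=(r+1)(q+1)x-(r+1)py-pz$. On the right-hand side, $\phi_L(x\triangleright z)=p$ and $\phi_L(y\triangleright z)=q$, so $(x\triangleright z)\triangleright(y\triangleright z)=(q+1)\bigl((r+1)x-pz\bigr)-p\bigl((r+1)y-qz\bigr)$; the coefficient of $z$ here is $-(q+1)p+pq=-p$, so this equals $(q+1)(r+1)x-p(r+1)y-pz$. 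Since $\Lambda_{\mu}$ is commutative, $(r+1)(q+1)=(q+1)(r+1)$ and $(r+1)p=p(r+1)$, so the two sides agree, establishing (Q3).

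I expect no genuine obstacle here; the computation is routine. The only points requiring care are keeping the roles straight between the module $M_A(L)$ and the coefficient ring $\Lambda_{\mu}$, and consistently invoking commutativity of $\Lambda_{\mu}$ — in particular in the identity $\phi_L(x\triangleright y)=\phi_L(x)$ and in the cancellation $-(q+1)p+pq=-p$. It is worth remarking, although it is not part of the statement, that (Q2) need not hold: recovering $x$ from $x\triangleright y$ would require $\phi_L(y)+1$ to be a unit of $\Lambda_{\mu}$, which fails in general. This is precisely why one must pass to a suitable subset of $M_A(L)$ to obtain an honest quandle, as in Definition \ref{multiq}.
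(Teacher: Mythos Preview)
Your proof is correct and follows essentially the same route as the paper: a direct computation using only the $\Lambda_{\mu}$-linearity of $\phi_L$ and the commutativity of $\Lambda_{\mu}$. The only cosmetic differences are that the paper carries out the argument for an arbitrary $R$-linear map $\phi:M\to R$ over a commutative ring, and computes $\phi(x\triangleright y)=\phi(x)$ inline during the (Q3) expansion rather than isolating it as a preliminary lemma as you do.
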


\begin{proposition}
\label{multiq2} 
Let $L$ be a link, and let 
\[
U(L) = \{ x \in M_A(L) \mid \phi_L(x)+1 \text{ is a unit of } \Lambda_{\mu} \}.
\]
Then the operation $\triangleright$ of Proposition \ref{multiq1} defines a quandle structure on $U(L)$, with
\[x \triangleright^{-1} y=(\phi_L(y)+1)^{-1} \cdot (x+\phi_L(x)y).
\]
\end{proposition}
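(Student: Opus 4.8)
The plan is to reduce everything to one short computation, the identity $\phi_L(x \triangleright y) = \phi_L(x)$. Since $\phi_L$ is $\Lambda_{\mu}$-linear and $\phi_L(x), \phi_L(y)$ are elements of the commutative ring $\Lambda_{\mu}$, applying $\phi_L$ to the defining formula $x \triangleright y = (\phi_L(y)+1)x - \phi_L(x)y$ gives $\phi_L(x \triangleright y) = (\phi_L(y)+1)\phi_L(x) - \phi_L(x)\phi_L(y) = \phi_L(x)$. Hence $\phi_L(x \triangleright y) + 1 = \phi_L(x) + 1$, so $x \triangleright y \in U(L)$ whenever $x \in U(L)$; in particular $\triangleright$ restricts to a binary operation on $U(L)$.

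Next I would treat $\triangleright^{-1}$. The formula $x \triangleright^{-1} y = (\phi_L(y)+1)^{-1}(x + \phi_L(x)y)$ makes sense precisely when $\phi_L(y)+1$ is invertible in $\Lambda_{\mu}$, that is, when $y \in U(L)$, so it defines a binary operation on $U(L)$. The same computation as above, now with $(\phi_L(y)+1)^{-1}$ as the scalar, gives $\phi_L(x \triangleright^{-1} y) = (\phi_L(y)+1)^{-1}(\phi_L(x) + \phi_L(x)\phi_L(y)) = \phi_L(x)$, so $U(L)$ is closed under $\triangleright^{-1}$ as well.

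With closure established, axioms (Q1) and (Q3) on $U(L)$ are inherited immediately: they hold throughout $M_A(L)$ by Proposition \ref{multiq1}, hence a fortiori on the subset $U(L)$. It remains to verify (Q2). For $x, y \in U(L)$, substitute $x' = x \triangleright y$ into the formula for $\triangleright^{-1}$; using $\phi_L(x') = \phi_L(x)$ from the first step, one gets $x' \triangleright^{-1} y = (\phi_L(y)+1)^{-1}\bigl((\phi_L(y)+1)x - \phi_L(x)y + \phi_L(x)y\bigr) = x$. Symmetrically, substituting $x'' = x \triangleright^{-1} y$, which likewise satisfies $\phi_L(x'') = \phi_L(x)$, into the formula for $\triangleright$ yields $x'' \triangleright y = (\phi_L(y)+1)(\phi_L(y)+1)^{-1}(x + \phi_L(x)y) - \phi_L(x)y = x$. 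This gives both halves of (Q2).

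I do not expect a genuine obstacle here: everything is a direct calculation in the commutative ring $\Lambda_{\mu}$ together with the linearity of $\phi_L$. The one point that warrants care is the bookkeeping around $U(L)$ itself — one must confirm that $\triangleright^{-1}$ is defined on all of $U(L)$ (not merely where some incidental element happens to be a unit) and that $U(L)$ is genuinely closed under both operations — and both of these are exactly what the identity $\phi_L(x \triangleright y) = \phi_L(x)$ and its analogue for $\triangleright^{-1}$ supply.
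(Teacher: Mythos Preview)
Your proposal is correct and follows essentially the same approach as the paper: the key identity $\phi_L(x\triangleright y)=\phi_L(x)$ (and its $\triangleright^{-1}$ analogue) is used to establish closure of $U(L)$ under both operations, axioms (Q1) and (Q3) are inherited from Proposition~\ref{multiq1}, and (Q2) is checked by direct substitution. The paper carries out the computation in the slightly more general setting of an arbitrary $R$-linear map $\phi:M\to R$, but the argument is the same.
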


\begin{definition}
\label{multiq}
We call $U(L)$ the \emph{total multivariate Alexander quandle} of $L$. If $D$ is a diagram of $L$ then the \emph{fundamental multivariate Alexander quandle} $Q_A(L)$ is the minimal subquandle of $U(L)$ with $\gamma_D(a) \in Q_A(L)$ $\forall a \in A(D)$.
\end{definition}

Here are two remarks about Definition \ref{multiq}.

\begin{remark}
\label{rem1}
 The $\triangleright$ and $\triangleright^{-1}$ operations of Propositions \ref{multiq1} and \ref{multiq2} are closely connected to the maps $\rho_D$ and $\gamma_D$ of Definition \ref{almod}. If $c$ is a crossing with arcs indexed as in Fig.\ \ref{crossfig}, then $\gamma_D \rho_D (c)=0$ and $\kappa(a_2)=\kappa(a_3)$, so
\begin{align*}
& \gamma_D(a_2) \triangleright \gamma_D(a_1) = \gamma_D(a_2) \triangleright \gamma_D(a_1) - \gamma_D(\rho_D(c))\\
& =(\phi_L(\gamma_D(a_1))+1)\gamma_D(a_2)-\phi_L(\gamma_D(a_2))\gamma_D(a_1)-(1-t_{\kappa(a_2)})\gamma_D(a_1)
\\
& \qquad - t_{\kappa(a_1)}\gamma_D(a_2) + \gamma_D(a_3)
\\
& =t_{\kappa(a_1)}\gamma_D(a_2)-(t_{\kappa(a_2)}-1)\gamma_D(a_1)-(1-t_{\kappa(a_2)})\gamma_D(a_1)\\
& \qquad - t_{\kappa(a_1)}\gamma_D(a_2) + \gamma_D(a_3) 
\\ & = \gamma_D(a_3) \text{,}
\end{align*}
\begin{center} and \end{center}
\begin{align*}
& \gamma_D(a_3) \triangleright^{-1} \gamma_D(a_1) = \gamma_D(a_3) \triangleright^{-1} \gamma_D(a_1) + t_{\kappa(a_1)}^{-1} \gamma_D(\rho_D(c))\\
& =(\phi_L(\gamma_D(a_1))+1)^{-1} \cdot (\gamma_D(a_3)+\phi_L(\gamma_D(a_3))\gamma_D(a_1)) + t_{\kappa(a_1)}^{-1} \gamma_D(\rho_D(c))\\
& =t_{\kappa(a_1)}^{-1} \cdot(\gamma_D(a_3)+(t_{\kappa(a_3)}-1)\gamma_D(a_1)) + t_{\kappa(a_1)}^{-1} (1-t_{\kappa(a_2)})\gamma_D(a_1)\\
& \qquad+ t_{\kappa(a_1)}^{-1}t_{\kappa(a_1)}\gamma_D(a_2) - t_{\kappa(a_1)}^{-1}\gamma_D(a_3)
\\ & = \gamma_D(a_2) \text{.}
\end{align*}
\end{remark}

\begin{remark}
\label{rem2}
In general, the quandles $U(L)$ and $Q_A(L)$ are not standard Alexander quandles. For example, suppose $D$ is a crossing-free diagram of the three-component unlink $L^u_3=K_1 \cup K_2 \cup K_3$. Let $A(D)=\{a_1,a_2,a_3\}$, with $\kappa(a_i)=i$ for each $i$. Then $\gamma_D(a_i) \triangleright \gamma_D(a_j)=t_j\gamma_D(a_i) - (t_i-1)\gamma_D(a_j)$ for $1 \leq i,j \leq 3$, so we have
\begin{align*}
& \gamma_D(a_1) \triangleright (\gamma_D(a_2) \triangleright \gamma_D(a_3))= \gamma_D(a_1) \triangleright (t_3\gamma_D(a_2) - (t_2-1)\gamma_D(a_3))=\\
& (t_3(t_2-1)-(t_2-1)(t_3-1)+1)\gamma_D(a_1)-(t_1-1)(t_3\gamma_D(a_2) - (t_2-1)\gamma_D(a_3))
\\
& =t_2\gamma_D(a_1)-(t_1-1)t_3\gamma_D(a_2)+(t_1-1)(t_2-1)\gamma_D(a_3)\text{,}
\end{align*}
\begin{center} and \end{center}
\begin{align*}
& (\gamma_D(a_1) \triangleright \gamma_D(a_2)) \triangleright (\gamma_D(a_1) \triangleright \gamma_D(a_3)) \\
& = (t_2\gamma_D(a_1) - (t_1-1)\gamma_D(a_2)) \triangleright (t_3\gamma_D(a_1) - (t_1-1)\gamma_D(a_3))\\
& =(t_3(t_1-1)-(t_1-1)(t_3-1)+1)(t_2\gamma_D(a_1) - (t_1-1)\gamma_D(a_2))\\
& -(t_2(t_1-1)-(t_1-1)(t_2-1))(t_3\gamma_D(a_1) - (t_1-1)\gamma_D(a_3))
\\
& =t_1(t_2\gamma_D(a_1) - (t_1-1)\gamma_D(a_2)) -(t_1-1)(t_3\gamma_D(a_1) - (t_1-1)\gamma_D(a_3))
\\
& =(t_1t_2-t_1t_3+t_3)\gamma_D(a_1)-t_1(t_1-1)\gamma_D(a_2)+(t_1-1)^2\gamma_D(a_3).
\end{align*}
As $C(D)= \emptyset$, $\gamma_D:\Lambda_3^{A(D)} \to M_A(L^u_3)$ is an isomorphism. Therefore the calculations above imply that $\gamma_D(a_1) \triangleright (\gamma_D(a_2) \triangleright \gamma_D(a_3)) \neq (\gamma_D(a_1) \triangleright \gamma_D(a_2)) \triangleright (\gamma_D(a_1) \triangleright \gamma_D(a_3))$. However the $\triangleright$ formula of Proposition \ref{standardq} implies that all standard Alexander quandles have $x \triangleright (y \triangleright z) = (x \triangleright y) \triangleright (x \triangleright z)$ $\forall x,y,z$. It follows that neither $Q_A(L^u_3)$ nor $U(L^u_3)$ is a standard Alexander quandle.
\end{remark}

The quandles $U(L)$ and $Q_A(L)$ are link invariants:

\begin{proposition}
\label{qinv}
If $D$ and $D'$ are diagrams of the same link type, then the resulting $U(L)$ quandles are isomorphic, and  the resulting $Q_A(L)$ quandles are isomorphic.
\end{proposition}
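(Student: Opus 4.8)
My plan is to deduce both isomorphisms from the invariance of $M_A(L)$ under the Reidemeister moves established in Section~\ref{secdef}, used in the form: if $D$ and $D'$ are diagrams of the same link $L$, then there is a $\Lambda_\mu$-linear isomorphism $f\colon M_A(L)\to M_A(L')$ with $\phi_{L'}f=\phi_L$. This is precisely the statement that the link module sequence is an invariant in the sense of Definition~\ref{Crowelleq}, and it can be read off from the proof in Section~\ref{secdef}: the isomorphisms built there match $\gamma_D(a)$ with $\gamma_{D'}(a)$ for arcs $a$ unaffected by a given move and express $\gamma_D(a)$ as a quandle combination of nearby $\gamma_{D'}$-images otherwise, so the compatibility $\phi_{L'}f=\phi_L$ follows from $\phi_L(\gamma_D(a))=t_{\kappa(a)}-1$ together with the identity $\phi_{L'}(x\triangleright y)=\phi_{L'}(x)$. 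Granting this, the statement about $U(L)$ is nearly formal: since $\phi_{L'}(f(x))+1=\phi_L(x)+1$ for all $x$, the map $f$ carries $U(L)$ bijectively onto $U(L')$, and since the operations $\triangleright$, $\triangleright^{-1}$ of Propositions~\ref{multiq1} and~\ref{multiq2} are written purely in terms of the module structure and the value of $\phi$, the relation $\phi_{L'}f=\phi_L$ gives $f(x\triangleright y)=f(x)\triangleright f(y)$ and $f(x\triangleright^{-1}y)=f(x)\triangleright^{-1}f(y)$. Thus $f$ restricts to a quandle isomorphism $U(L)\to U(L')$.

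For $Q_A(L)$ it then suffices to treat a single Reidemeister move relating $D$ and $D'$ and to arrange that the isomorphism $f$ above maps $Q_A(L)$ — the subquandle of $U(L)$ generated by $\{\gamma_D(a):a\in A(D)\}$ — onto $Q_A(L')$; since a quandle isomorphism carries each generated subquandle onto the subquandle generated by the image, this reduces to showing that $f(\{\gamma_D(a):a\in A(D)\})$ and $\{\gamma_{D'}(a):a\in A(D')\}$ generate the same subquandle of $U(L')$. This should hold because in each elementary move all but finitely many arcs persist with their $\gamma$-images matched by $f$, while the $\gamma$-image of every arc that is created or destroyed is obtained from $\gamma$-images of neighboring arcs by $\triangleright$ or $\triangleright^{-1}$: that is exactly the content of Remark~\ref{rem1}, which identifies the module relation $\gamma_D\rho_D(c)=0$ at a crossing with the quandle relations $\gamma_D(a_3)=\gamma_D(a_2)\triangleright\gamma_D(a_1)$ and $\gamma_D(a_2)=\gamma_D(a_3)\triangleright^{-1}\gamma_D(a_1)$, supplemented by axiom (Q1) at the self-crossing introduced by a Reidemeister~I move. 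Equivalently, Remark~\ref{rem1} provides a quandle homomorphism $Q(L)\to U(L)$, $a\mapsto\gamma_D(a)$, with image $Q_A(L)$, and one is verifying that this map is natural with respect to the Reidemeister-move isomorphisms of $Q(L)$ (which exist by \cite{EN,J,M}) and of $U(L)$.

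The step I expect to be the real work is this last compatibility check: running through Reidemeister moves I, II, and III and confirming that the module isomorphism supplied by Section~\ref{secdef} can be chosen to match the corresponding isomorphism of fundamental quandles, so that the two distinguished generating sets are carried onto each other up to quandle operations. Reidemeister~I is the delicate case, since there an arc is split in two and it is the idempotent law (Q1) — not a module relation coming from a crossing — that collapses the two $\gamma$-images; Reidemeister~III requires tracking several simultaneously relabeled arcs. If the particular isomorphism produced in Section~\ref{secdef} is not manifestly compatible with the arc structure, the remedy is to postcompose it with a $\phi_{L'}$-fixing automorphism of $M_A(L')$ realizing the change of generators used there, or simply to exhibit, for each elementary move, an explicit isomorphism directly from the two presentations; the remaining verifications are then routine.
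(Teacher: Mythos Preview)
Your proposal is correct and follows essentially the same approach as the paper: the paper proves the stronger Proposition~\ref{seqinv} by explicitly constructing, for each Reidemeister move, a $\Lambda_\mu$-linear isomorphism $f$ with $\phi_{D'}=\phi_D f$, deducing the $U(L)$ isomorphism formally from this compatibility, and then checking that the generating sets $\gamma_D(A(D))$ and $\gamma_{D'}(A(D'))$ match up to $\triangleright$ and $\triangleright^{-1}$ operations---exactly the ``real work'' you anticipate. Your more conceptual framing via naturality of the map $Q(L)\to U(L)$ is a perfectly good way to organize the argument, but it does not bypass the move-by-move verification; the paper simply carries out that verification directly (and in Reidemeister~I the equality $\gamma_{D'}(a_1)=\gamma_{D'}(a_2)$ already follows from the module relation $\rho_{D'}(c_0)=t_{\kappa(a_1)}(a_2-a_1)$, so no separate appeal to (Q1) is needed).
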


Notice that even though the component indices in $L$ are used to define $\phi_L$, the quandle structures of $U(L)$ and $Q_A(L)$ do not reflect the indices explicitly. That is, if $L=K_1 \cup \dots \cup K_{\mu}$ and $L'=K'_1 \cup \dots \cup K'_{\mu}$ are the same except for the indexing of their components, then $U(L) \cong U(L')$ and $Q_A(L) \cong Q_A(L')$. In contrast, the component indices are reflected in the elements of the coefficient ring $\Lambda_{\mu}$, and (hence) in the modules that appear in the link module sequence. Our main theorem shows that aside from this lack of sensitivity to component indices, $Q_A(L)$ is at least as strong an invariant as the link module sequence.

\begin{theorem}
\label{main}
Consider the following relations that might hold between invariants of two links $L=K_1 \cup \dots \cup K_{\mu}$ and $L'=K'_1 \cup \dots \cup K'_{\mu'}$. 
\begin{enumerate}
\item The fundamental quandles of $L$ and $L'$ are isomorphic.
\item The fundamental multivariate Alexander quandles of $L$ and $L'$ are isomorphic.
\item The components of $L$ and $L'$ can be indexed so that the link module sequences of $L$ and $L'$ are equivalent.
\item The total multivariate Alexander quandles of $L$ and $L'$ are isomorphic.
\end{enumerate}
Then the implications $1 \implies 2 \implies 3 \implies 4$ hold.
\end{theorem}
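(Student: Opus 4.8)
The plan is to establish the three implications one at a time, building up from structure (the fundamental quandle) to the module-theoretic data.

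For $1 \implies 2$: I would use the fact, visible in Remark \ref{rem1}, that the assignment $a \mapsto \gamma_D(a)$ respects the crossing relations of the fundamental quandle. Concretely, the relations $a_2 \triangleright a_1 = a_3$ and $a_3 \triangleright^{-1} a_1 = a_2$ that define $Q(L)$ from a diagram $D$ are exactly the relations satisfied in $U(L)$ by the images $\gamma_D(a_i)$, by Remark \ref{rem1}. Hence there is a canonical quandle homomorphism $Q(L) \to U(L)$ sending each generator $a$ to $\gamma_D(a)$, and its image is precisely $Q_A(L)$ by the minimality clause in Definition \ref{multiq}. So $Q_A(L)$ is a quotient of $Q(L)$ via a \emph{natural} surjection. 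The content of $1 \implies 2$ is then that an isomorphism $Q(L) \to Q(L')$ carries this canonical congruence to the one for $L'$; this should follow because the congruence on $Q(L)$ whose quotient is $Q_A(L)$ can be described intrinsically (e.g., as the kernel congruence of the universal map to a ``multivariate Alexander-type'' quandle, or as generated by the relations forcing the $U(L)$ identities), and an isomorphism of fundamental quandles must preserve any such intrinsic congruence. I would need to be a little careful to phrase this without circularity — probably by checking that $Q_A(L)$, together with the surjection from $Q(L)$, is characterized by a universal property among quandle quotients of $Q(L)$ admitting a compatible module action.

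For $2 \implies 3$: this is the crux and I expect it to be the main obstacle. Given a quandle isomorphism $h:Q_A(L) \to Q_A(L')$, I want to recover an equivalence of link module sequences after reindexing components. The idea is that $M_A(L)$ is generated as a $\Lambda_\mu$-module by $\gamma_D(A(D)) \subseteq Q_A(L)$, so I would try to show $M_A(L)$ can be reconstructed from the quandle $Q_A(L)$ together with the map $\phi_L$ restricted to it — and that $\phi_L|_{Q_A(L)}$ is itself recoverable from the quandle structure. The operation $x \triangleright y = (\phi_L(y)+1)x - \phi_L(x)y$ shows that the ``scalars'' $\phi_L(x)+1$ act through the quandle; one should be able to detect, from the quandle alone, enough of the $\Lambda_\mu$-action to rebuild the module. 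The reindexing enters because the variables $t_i$ themselves are not intrinsic to the quandle (Proposition \ref{qinv} and the paragraph after it), so one first chooses a bijection of component index sets making $\phi_L$ and $\phi_{L'}$ compatible, then transports $h$ to a $\Lambda_\mu$-linear map on modules. The technical work is: (a) show $\mu = \mu'$ and produce the reindexing; (b) extend $h$ to a $\Lambda_\mu$-linear isomorphism $M_A(L) \to M_A(L')$; (c) verify $\phi_{L'} \circ f = \phi_L$. Step (b) is where I'd expect to grind: one presents $M_A(L)$ by generators $\gamma_D(a)$ and the crossing relations $\rho_D(c)=0$, notes these relations are quandle-theoretic consequences of the $\triangleright$ identities among elements of $Q_A(L)$ (Remark \ref{rem1}), and concludes that $h$ sends a presentation of $M_A(L)$ to a presentation of $M_A(L')$, hence induces $f$. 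A subtlety: one must rule out that distinct module elements collapse in the quandle, i.e., confirm that the $\Lambda_\mu$-span relation is faithfully encoded — likely by using that $Q_A(L)$ contains $\gamma_D(a)$ and all their $\triangleright$-combinations, which already span $M_A(L)$, plus that subtraction/scaling by $t_i-1$ is expressible via $\triangleright$ with these elements.

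For $3 \implies 4$: given an equivalence $f:M_A(L) \to M_A(L')$ of link module sequences (after reindexing, so $\phi_{L'} f = \phi_L$ and the coefficient rings are identified), I would simply check that $f$ restricts to a bijection $U(L) \to U(L')$ and is a quandle homomorphism. The condition $\phi_{L'}(f(x)) = \phi_L(x)$ means $\phi_L(x)+1$ is a unit iff $\phi_{L'}(f(x))+1$ is, so $f(U(L)) = U(L')$; and since $f$ is $\Lambda_\mu$-linear, $f(x \triangleright y) = (\phi_L(y)+1)f(x) - \phi_L(x)f(y) = (\phi_{L'}(f(y))+1)f(x) - \phi_{L'}(f(x))f(y) = f(x) \triangleright f(y)$, using $\phi_{L'} f = \phi_L$ throughout. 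This implication is essentially formal once the reindexing is in place.

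I expect $2 \implies 3$ to be by far the hardest: reconstructing a module (with its $\phi$-map) from a set-with-operations requires pinning down exactly how much of the $\Lambda_\mu$-structure is latent in the quandle, and handling the reindexing of components carefully so that the recovered map is genuinely $\Lambda_\mu$-linear for a consistent choice of variables.
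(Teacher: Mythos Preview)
Your argument for $3 \implies 4$ is correct and is exactly what the paper does.

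For $2 \implies 3$ your outline is right, but there is a genuine gap in step (b). You propose to present $M_A(L)$ by the arc generators $\gamma_D(a)$ and the crossing relations, and then transport this presentation along the quandle isomorphism $h:Q_A(L)\to Q_A(L')$. The problem is that $h$ need not carry the subset $\gamma_D(A(D))\subset Q_A(L)$ to $\gamma_{D'}(A(D'))\subset Q_A(L')$: the arc set depends on the diagram, not just on the quandle. What you need instead is a presentation of $M_A(L)$ whose generating set is \emph{all of} $Q_A(L)$ and whose relations are expressed purely in terms of the quandle operations and the orbit labels $\kappa$. The paper does exactly this (Theorem~\ref{aqpres} and the supporting Corollary~\ref{cor20}): $M_A(L)$ is the quotient of $\Lambda_\mu^{Q_A(L)}$ by the submodule generated by $t_{\kappa(y)}x+(1-t_{\kappa(x)})y-(x\triangleright y)$ and the analogous $\triangleright^{-1}$ elements. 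Once you have that, your steps (a)--(c) go through: match orbits to get $\mu=\mu'$ and reindex so $\kappa'\circ h=\kappa$; then the free-module isomorphism $\widehat h:\Lambda_\mu^{Q_A(L)}\to\Lambda_\mu^{Q_A(L')}$ carries the relation submodule to the relation submodule, inducing the desired $f$. The subtlety you flag---that the $\Lambda_\mu$-span relation is faithfully encoded---is precisely what this presentation resolves, and it requires real work (Lemma~\ref{lem19}) to check that the relation submodule is no larger than the kernel.

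For $1 \implies 2$ your ``intrinsic congruence / universal property'' framing is where the circularity risk you mention is real: describing the kernel of $\sigma:Q(L)\to Q_A(L)$ intrinsically, without first building $M_A(L)$, is not obviously possible. The paper sidesteps this entirely by using the same strategy as above one level up: it gives a presentation of $M_A(L)$ with generating set all of $Q(L)$ (Theorem~\ref{qpres}), transports it along $f:Q(L)\to Q(L')$ to get a module isomorphism $\widetilde f:M_A(L)\to M_A(L')$ compatible with $\phi$, and then observes that $\widetilde f$ carries $\sigma(Q(L))=Q_A(L)$ onto $\sigma'(Q(L'))=Q_A(L')$. So rather than characterizing the congruence abstractly, the paper reconstructs the target module from the quandle and restricts. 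Your approach might be completable, but the universal property you would need is essentially equivalent to Theorem~\ref{qpres}, so you gain nothing by avoiding the module route.
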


At least two of the implications of Theorem \ref{main} have false converses: $2 \centernot \implies 1$ and $4 \centernot \implies 3$. At the time of writing, we do not know whether $3 \implies 2$ is valid.\footnote{Several months after the present paper was completed, we were able to show that $3 \centernot \implies 2$ in Theorem \ref{main}. See the third paper in this series \cite{mvaq3} for details.}

Proposition \ref{red} and Theorem \ref{main} give us the following.
\begin{corollary}
\label{cor}
Multivariate Alexander quandles associated with multivariate Alexander modules are strictly stronger link invariants than standard Alexander quandles associated with reduced Alexander invariants or reduced Alexander modules.
\end{corollary}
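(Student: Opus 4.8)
The plan is to obtain Corollary \ref{cor} by chaining the implications of Theorem \ref{main} with Proposition \ref{red}, after recording one elementary bridging observation: a $\Lambda$-linear isomorphism between two $\Lambda$-modules is automatically an isomorphism of the standard Alexander quandles they carry, because the operations $x \triangleright y = tx+(1-t)y$ and $x \triangleright^{-1} y = t^{-1}(x+(t-1)y)$ of Proposition \ref{standardq} are expressed entirely through module addition and scalar multiplication. Hence an equivalence of reduced link module sequences, i.e.\ a $\Lambda$-linear isomorphism $f : M_A^{red}(L) \to M_A^{red}(L')$ with $\phi_{L'}^{red} f = \phi_L^{red}$, is simultaneously an isomorphism of the standard Alexander quandles on $M_A^{red}(L)$ and $M_A^{red}(L')$; and since such an $f$ carries $\ker \phi_L^{red}$ onto $\ker \phi_{L'}^{red}$, its restriction is an isomorphism of the standard Alexander quandles on the two reduced Alexander invariants. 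For the ``at least as strong'' half, I would argue as follows: if $Q_A(L) \cong Q_A(L')$, then $2 \implies 3$ of Theorem \ref{main} lets us index components so that the link module sequences are equivalent, part (1) of Proposition \ref{red} makes the reduced link module sequences equivalent, and the bridging observation then gives isomorphisms of the standard Alexander quandles on both the reduced Alexander module and the reduced Alexander invariant. Here I would read the corollary's ``multivariate Alexander quandle'' as the fundamental one $Q_A(L)$, since the listed implications do not supply a route back from condition 4 and hence do not by themselves handle the total quandle $U(L)$.

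For strictness I would use part (2) of Proposition \ref{red}: fix links $L$ and $L'$ with equivalent reduced link module sequences but inequivalent link module sequences. The bridging observation shows their reduced Alexander modules are $\Lambda$-isomorphic, so the standard Alexander quandles on $M_A^{red}(L)$ and $M_A^{red}(L')$, and likewise those on their reduced Alexander invariants, are isomorphic. On the other hand, $Q_A(L) \cong Q_A(L')$ would give, via $2 \implies 3$ of Theorem \ref{main}, an indexing of the components under which the link module sequences are equivalent. Provided the example is chosen so that no relabeling of components produces equivalent link module sequences, this is a contradiction, so $Q_A(L) \not\cong Q_A(L')$ while all the reduced standard invariants agree; that is the asserted strictness.

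The step I expect to be the main obstacle is exactly this last robustness requirement. Definition \ref{Crowelleq} and Proposition \ref{red}(2) fix one indexing of the components, but condition 3 of Theorem \ref{main} permits an arbitrary permutation, so I need the link module sequences of $L$ and $L'$ to remain inequivalent under all $\mu!$ relabelings. I would settle this by examining the concrete example built in Section \ref{secred}: either it already has this property, or it can be adjusted — for instance by choosing the components so that the pair $(L,L')$ admits no nontrivial relabeling symmetry — so that it does. This is a finite, example-dependent verification rather than a formal consequence of the quoted results, and it is the only point where care beyond direct citation is needed.
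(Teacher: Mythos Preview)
Your proposal is correct and follows the same route the paper intends: the corollary is stated there simply as a consequence of Proposition \ref{red} and Theorem \ref{main}, with no separate proof, so the chain $Q_A(L)\cong Q_A(L') \implies$ equivalent link module sequences $\implies$ equivalent reduced sequences $\implies$ isomorphic standard Alexander quandles is exactly what is meant. Your one flagged obstacle, the reindexing robustness, is handled precisely as you anticipate: in Section \ref{secred} the paper checks explicitly that the modules $M_A(L)$ and $M_A(M)$ of the $9_{46}\sqcup\text{unknot}$ and Milnor's link remain non-isomorphic even after reversing component indices, so no further adjustment of the example is needed.
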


Here is an outline of the paper. Propositions \ref{multiq1} and \ref{multiq2} are proven in Sec.\ \ref{secq}, and Proposition \ref{qinv} is proven in Sec.\ \ref{secdef}. In Sec.\ \ref{seclemm} we lay the groundwork for the proof of Theorem \ref{main} by constructing a presentation of the module $M_A(L)$ that uses only information from the fundamental quandle $Q(L)$, and a related presentation that uses only information from $Q_A(L)$. The proof of Theorem \ref{main} is completed in Sec.\ \ref{secproof}, and counterexamples to the converses of the implications $1 \implies 2$ and $3 \implies 4$ are given in Sec.\ \ref{seccoun}. Proposition \ref{red} is proven in Sec.\ \ref{secred}. In Sec.\ \ref{secprob} we discuss possible further developments involving the ideas of the paper.

\section{Propositions \ref{multiq1} and \ref{multiq2}}
\label{secq}

These two propositions hold in a more general setting. Suppose $R$ is a commutative ring with unity, $M$ is an $R$-module, and $\phi:M \to R$ is an $R$-linear map. For $x,y \in M$, let $x \triangleright y=(\phi(y)+1)x-\phi(x)y$.  Then the first and third quandle axioms hold:

\medskip

(Q1) If $x \in M$, then $x \triangleright x = (\phi(x)+1)x-\phi(x)x =x$. 

\medskip

(Q3) If $x,y,z \in M$, then 
\begin{align*}
& (x \triangleright y) \triangleright z=(\phi(z)+1)(x \triangleright y)-\phi(x \triangleright y)z \\
& =(\phi(z)+1)((\phi(y)+1)x-\phi(x)y)-\phi((\phi(y)+1)x-\phi(x)y)z \\
& =(\phi(z)+1)(\phi(y)+1)x+(\phi(z)+1)(-\phi(x))y-((\phi(y)+1)\phi(x) -\phi(x)\phi(y))z \\
& =(\phi(z)+1)(\phi(y)+1)x-(\phi(z)+1)\phi(x)y-\phi(x)z \text{,}
\end{align*}
\begin{align*}
& \qquad \qquad \qquad \qquad \qquad \qquad \text{and} \\
& (x \triangleright z) \triangleright (y \triangleright z)=((\phi(z)+1)x-\phi(x)z) \triangleright ((\phi(z)+1)y-\phi(y)z) \\
& =(\phi((\phi(z)+1)y-\phi(y)z)+1)((\phi(z)+1)x-\phi(x)z) \\
& -\phi((\phi(z)+1)x-\phi(x)z)((\phi(z)+1)y-\phi(y)z) \\
& =(\phi(y)+1)((\phi(z)+1)x-\phi(x)z)-\phi(x)((\phi(z)+1)y-\phi(y)z) \\
& =(\phi(y)+1)(\phi(z)+1)x+\phi(x)(-\phi(z)-1)y+(-\phi(x)(\phi(y)+1)+\phi(x)\phi(y))z \\
& =(\phi(y)+1)(\phi(z)+1)x-\phi(x)(\phi(z)+1)y-\phi(x)z=(x \triangleright y) \triangleright z.
\end{align*}

Now let $U(\phi)=\{x \in M \mid \phi(x)+1 \text{ is a unit of } R\}$, and for $x,y \in U(\phi)$ let $x \triangleright^{-1} y=(\phi(y)+1)^{-1} \cdot (x+\phi(x)y).$ As $\phi$ is $R$-linear, 
\begin{align*}
& \phi(x \triangleright y)=\phi((\phi(y)+1)x-\phi(x)y)
=(\phi(y)+1)\phi(x)-\phi(x)\phi(y)=\phi(x) \text { and}\\
& \phi(x \triangleright^{-1} y)=(\phi(y)+1)^{-1} \phi(x+\phi(x)y) =(\phi(y)+1)^{-1}\phi(x)(1+\phi(y))=\phi(x).
\end{align*}
It follows that $U(\phi)$ is closed under both $\triangleright$ and $\triangleright^{-1}$. The proof is completed by verifying the second quandle axiom:

\medskip

(Q2) If $x,y \in U(\phi)$, then 
\begin{align*}
& (x \triangleright y) \triangleright^{-1} y=(\phi(y)+1)^{-1} \cdot  (x \triangleright y + \phi(x \triangleright y)y) \\
& =(\phi(y)+1)^{-1} \cdot  ((\phi(y)+1)x-\phi(x)y + \phi((\phi(y)+1)x-\phi(x)y)y) \\
& =(\phi(y)+1)^{-1} \cdot  ((\phi(y)+1)x+(-\phi(x) + (\phi(y)+1)\phi(x)-\phi(x)\phi(y))y) \\
& =(\phi(y)+1)^{-1} \cdot  ((\phi(y)+1)x+0y)=x
\end{align*}
\begin{center} and \end{center}
\begin{align*}
& (x \triangleright^{-1} y) \triangleright y= (\phi(y)+1)(x \triangleright^{-1} y)-\phi(x \triangleright^{-1} y)y \\
& =(\phi(y)+1)(\phi(y)+1)^{-1} \cdot(x + \phi(x)y) -\phi((\phi(y)+1)^{-1} \cdot (x+\phi(x)y))y \\
& =x + \phi(x)y - (\phi(y)+1)^{-1} \cdot (\phi(x)+\phi(x)\phi(y))y \\
& =x + \phi(x)y - (\phi(y)+1)^{-1} \cdot (1+\phi(y))\phi(x)y = x \text{.}
\end{align*}
\qed

\section{Invariance under the Reidemeister moves}
\label{secdef}

In this section we prove that different diagrams of the same link type provide isomorphic multivariate Alexander quandles $U(L)$ and $Q_A(L)$. Along the way we also verify the well-known fact that different diagrams give rise to link module sequences that are equivalent, in the sense of Definition \ref{Crowelleq}.

Let $D$ be a diagram of a $\mu$-component link $L$. To allow for hypothetical dependence on diagrams, we temporarily modify the notation used in the introduction: $M_A(D)$ replaces $M_A(L)$, $U(D)$ replaces $U(L)$, and so on.  Let $\Phi_D:\Lambda_{\mu}^{A(D)} \to I_{\mu}$ be the $\Lambda_{\mu}$-linear map with $\Phi_D(a)=t_{\kappa(a)}-1$ for each $a \in A(D)$. If $c$ is a crossing of $D$ with arcs indexed as in Fig.\ \ref{crossfig}, then $\kappa(a_3)=\kappa(a_2)$. It follows that the image of the $c$ generator of $\Lambda_{\mu}^{C(D)}$ under the composition $\Phi_D \rho_D$ is
\begin{gather*}
\Phi_D( \rho_D(c))=(1-t_{\kappa(a_2)}) \Phi_D(a_1)+t_{\kappa(a_1)}\Phi_D(a_2)-\Phi_D(a_3) \\
= (1-t_{\kappa(a_2)})(t_{\kappa(a_1)}-1)+t_{\kappa(a_1)}(t_{\kappa(a_2)}-1)-(t_{\kappa(a_2)}-1) = 0 \text{,}
\end{gather*}
so $\Phi_D$ defines a $\Lambda_{\mu}$-linear map $\phi_D:M_A(D) \to I_{\mu}$ with $\phi_D (\gamma_D(a))=\Phi_D(a)$ $\forall a \in A(D)$. It is evident that $\Phi_D$ and $\phi_D$ are surjective.

\begin{proposition}
\label{seqinv}
If $D$ and $D'$ are diagrams of the same link type then there is an isomorphism $f:M_A(D) \to M_A(D')$ such that $\phi_D=\phi_{D'} f$. Moreover, $f$ restricts to quandle isomorphisms $U(D) \cong U(D')$ and $Q_A(D) \cong Q_A(D')$.
\end{proposition}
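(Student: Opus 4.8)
The plan is to follow the standard template for proving invariance of a diagrammatic link invariant: verify that the conclusion is unchanged under each of the three Reidemeister moves. Since two diagrams of the same link type are connected by a finite sequence of Reidemeister moves (and planar isotopies, which obviously change nothing), it suffices to treat a single move $D \leftrightarrow D'$ in which $D'$ is obtained from $D$ by one move performed inside a small disk, leaving everything outside the disk untouched.

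\medskip

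First I would set up bookkeeping that is common to all three moves. Outside the disk, $A(D)$ and $A(D')$ share a common set of arcs, and $C(D)$, $C(D')$ share a common set of crossings; the move only alters finitely many arcs and crossings inside the disk. I would exhibit explicit $\Lambda_{\mu}$-linear maps between the free modules $\Lambda_{\mu}^{A(D)}$ and $\Lambda_{\mu}^{A(D')}$ that are the identity on the shared arcs and send each ``new'' arc to the appropriate $\Lambda_{\mu}$-combination of old arcs dictated by the local picture (for an R1 kink, the new arc equals the old one modulo the new relator; for R2, the two new arcs are expressed in terms of the two strands; for R3, the relevant arcs before and after the move are matched up using the three crossing relators). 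The key point is that these maps carry $\operatorname{im} \rho_D$ into $\operatorname{im} \rho_{D'}$ and vice versa, so they descend to mutually inverse maps $f:M_A(D) \to M_A(D')$ and $g:M_A(D') \to M_A(D)$. Because each such map sends an arc generator $a$ either to an arc generator $a'$ with $\kappa(a')=\kappa(a)$, or to a combination of arc generators all having that same $\kappa$-value, and because $\Phi_{D'}(a')=t_{\kappa(a')}-1$, the compatibility $\Phi_D = \Phi_{D'} \circ (\text{lift of } f)$ holds at the level of free modules, hence $\phi_D = \phi_{D'} f$ after passing to cokernels. This simultaneously proves the module-sequence equivalence of Definition \ref{Crowelleq} and sets up the quandle statement.

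\medskip

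For the quandle part I would argue as follows. An isomorphism $f:M_A(D) \to M_A(D')$ with $\phi_{D'} f = \phi_D$ automatically respects the operation $x \triangleright y = (\phi(y)+1)x - \phi(x)y$ of Proposition \ref{multiq1}, since that operation is written purely in terms of the module structure and $\phi$; hence $f$ maps $U(D)$ bijectively onto $U(D')$ (as $\phi_D(x)+1$ is a unit iff $\phi_{D'}(f(x))+1$ is) and is a quandle isomorphism $U(D) \cong U(D')$. For $Q_A(D) \cong Q_A(D')$, recall $Q_A(D)$ is the subquandle of $U(D)$ generated by $\{\gamma_D(a) \mid a \in A(D)\}$. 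Here I must be slightly careful: $f$ need not send $\gamma_D(a)$ to some $\gamma_{D'}(a')$; rather it sends it to a $\Lambda_{\mu}$-combination of the $\gamma_{D'}(a')$. But Remark \ref{rem1} shows that each $\gamma_{D'}(a')$ already lies in the subquandle generated by the images of the arcs adjacent to it across a crossing — more precisely, the module relation $\gamma_{D'}(\rho_{D'}(c))=0$ is exactly what makes $\gamma_{D'}(a_3') = \gamma_{D'}(a_2') \triangleright \gamma_{D'}(a_1')$ etc. I would therefore check, move by move, that the generators $f(\gamma_D(a))$ of $f(Q_A(D))$ and the generators $\gamma_{D'}(a')$ of $Q_A(D')$ lie in each other's generated subquandles, so that $f(Q_A(D)) = Q_A(D')$; the same for $g$. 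The most efficient phrasing is: the set $\{\gamma_D(a)\}$ and the set $\{g(\gamma_{D'}(a'))\}$ generate the same subquandle of $U(D)$, which is seen directly from the local relations of the move.

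\medskip

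The main obstacle, and the step deserving the most care, is the $Q_A$ claim under R2 and R3, precisely because $f$ acts on module elements rather than on arc labels, so ``minimal subquandle containing the arc images'' is not obviously preserved on the nose. The resolution is the observation just described — that each arc image is, via the crossing relators, already expressible through neighboring arc images using $\triangleright$ and $\triangleright^{-1}$ — but making this airtight requires writing out, for the two new crossings of an R2 move and the three crossings of an R3 move, exactly which arc images generate which, and confirming the two generating sets coincide. The R1 case is comparatively trivial since only one arc and one crossing are involved. I would organize the proof as: (i) the common framework and the definition of $f, g$; (ii) R1; (iii) R2; (iv) R3; (v) in each case, the one-line check of $\phi_D = \phi_{D'} f$ and the subquandle-generation check, citing Remark \ref{rem1} for the latter.
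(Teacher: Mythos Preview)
Your proposal is correct and follows essentially the same route as the paper: reduce to a single Reidemeister move, construct an explicit $\Lambda_{\mu}$-linear map $F$ between the free modules that carries $\operatorname{im}\rho_D$ onto $\operatorname{im}\rho_{D'}$, verify $\Phi_{D}=\Phi_{D'}F$ so that the induced $f$ satisfies $\phi_D=\phi_{D'}f$, deduce the $U$-isomorphism from $\phi$-compatibility, and then check case by case that the arc-image generating sets of the $Q_A$ quandles match (the paper, like you, singles out the extra arc images in the $\Omega.2$ and $\Omega.3$ cases and expresses them via $\triangleright$ using the crossing relators).

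One small correction: your stated reason for $\Phi_D=\Phi_{D'}\circ F$ --- that each arc maps either to an arc with the same $\kappa$-value or to a combination of arcs all sharing that $\kappa$-value --- is not accurate. In the $\Omega.2$ move, $F(a_3)=(1-t_{\kappa(a_2)})a_1+t_{\kappa(a_1)}a_2$ involves arcs from \emph{different} components in general. The conclusion $\Phi_D F(a_3)=t_{\kappa(a_3)}-1$ still holds, but it follows from the same cancellation that gives $\Phi_D\rho_D(c)=0$ for any crossing $c$, not from constancy of $\kappa$; the paper simply checks $\Phi_{D'}=\Phi_D F$ directly in each case rather than appealing to such a principle.
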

\begin{proof}
It suffices to verify the proposition if $D'$ is obtained from $D$ by applying a single Reidemeister move. We give details for one instance of each type of move, and leave it to the reader to consider other instances, obtained by reversing crossings or orientations.

\begin{figure} [bht]
\centering
\begin{tikzpicture} [>=angle 90]
\draw [thick] [->] (-0.25,-0.25) -- (.75,.75);
\draw [thick] (-0.25,-0.25) to [out=225, in=0] (-1,-.75);
\draw [thick] (-1,-.75) to [out=180, in=-90] (-1.9,0);
\draw [thick] (-1,.75) to [out=180, in=90] (-1.9,0);
\draw [thick] (-1,.75) to [out=0, in=-45] (-0.2,0.2);
\draw [thick] (.75,-0.75) -- (0.1,-0.1);
\draw [thick] [->] (-4,-0.75) -- (-4,.75);
\node at (-2.2,0) {$a_1$};
\node at (1,-0.6) {$a_2$};
\node at (-4.3,0) {$a_1$};
\node at (0,-.45) {$c_0$};
\end{tikzpicture}
\caption{An $\Omega.1$ move.}
\label{firstmove}
\end{figure}
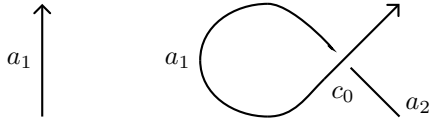

Suppose $D'$ is obtained from $D$ by adding a single new crossing $c_0$ in an $\Omega.1$ move, as illustrated in Fig.\ \ref{firstmove}. As $\kappa(a_1)=\kappa(a_2)$, the image under $\rho_{D'}$ of the $c_0$ generator of $\Lambda_{\mu}^{C(D')}$ is $t_{\kappa(a_1)} \cdot( a_2 - a_1)$. It follows that if $F:\Lambda_{\mu}^{A(D')} \to \Lambda_{\mu}^{A(D)}$ is the $\Lambda_{\mu}$-linear surjection with $F(a_1)=F(a_2)=a_1$ and $F(a)=a$ otherwise, then (i) $F ( \rho_{D'} (c)) = \rho_D (c)$ for every crossing $c \neq c_0$, (ii) $F ( \rho_{D'} (c_0))  = 0$, and (iii) $\Phi_{D'}=\Phi_{D}F$. Hence $F$ induces an isomorphism $f$ between the cokernels of $\rho_{D'}$ and $\rho_{D}$, with $\phi_{D'}=\phi_D f$. The equality $\phi_{D'}=\phi_D f$ implies that $f(U(D'))=U(D)$, and $f$ defines a quandle isomorphism between $U(D')$ and $U(D)$. Notice moreover that $F(A(D'))=A(D) \subset \Lambda_{\mu}^{A(D)}$; it follows that $f\gamma_{D'}(A(D'))=\gamma_D(A(D))$, so $f(Q_A(D'))=Q_A(D)$. 

Now, suppose $D'$ is obtained from $D$ by adding two new crossings in an $\Omega.2$ move, as in Fig.\ \ref{secondmove}. The generators of $\Lambda_{\mu}^{C(D')}$ corresponding to these new crossings are mapped by $\rho_{D'}$ to the elements 
\[
(1-t_{\kappa(a_2)})a_1+t_{\kappa(a_1)}a_2-a_3 \qquad \text{      and      } \qquad (1-t_{\kappa(a_2)})a_1+t_{\kappa(a_1)}a_4-a_3
\]
of $\Lambda_{\mu}^{A(D')}$. It follows that if $F:\Lambda_{\mu}^{A(D')} \to \Lambda_{\mu}^{A(D)}$ is the $\Lambda_{\mu}$-linear surjection with $F(a_3)=(1-t_{\kappa(a_2)})a_1+t_{\kappa(a_1)}a_2$, $F(a_4)=a_2$ and $F(a)=a$ otherwise, then (i) $F ( \rho_{D'} (c)) = \rho_D (c)$ for every crossing $c$ not pictured in Fig.\ \ref{secondmove}, (ii) $F ( \rho_{D'} (c))  = 0$ for both crossings pictured in Fig.\ \ref{secondmove}, and (iii) $\Phi_{D'}=\Phi_{D}F$. Again, we conclude that $F$ induces an isomorphism $f$ between the cokernels of $\rho_{D'}$ and $\rho_{D}$, with $\phi_{D'}=\phi_D f$. And again, $\phi_{D'}=\phi_D f$ implies that $f(U(D))=U(D')$, and $f$ defines a quandle isomorphism between $U(D')$ and $U(D)$. In this case $f\gamma_{D'}(A(D'))$ properly contains $\gamma_D(A(D))$, but the one extra element is $f\gamma_{D'}(a_3)=\gamma_D(a_2) \triangleright \gamma_D(a_1)$, so $f(Q_A(D'))=Q_A(D)$. 

\begin{figure} [bht]
\centering
\begin{tikzpicture} [>=angle 90]
\draw [thick] [<-] (1,-1) to [out=-180, in=-90] (-1,0);
\draw [thick] (1,1) to [out=-180, in=90] (-1,0);
\draw [thick] (-2,-1) -- (-.9,-0.7);
\draw [thick] (-2,1) -- (-.9,0.7);
\draw [thick] (-0.45,-0.55) to [out=20, in=-20] (-0.45,0.55);
\draw [thick] (-6,-1) -- (-6,1);
\draw [thick] [<-] (-5,-1) -- (-5,1);
\node at (1,0.7) {$a_1$};
\node at (-2,0.7) {$a_2$};
\node at (.2,0) {$a_3$};
\node at (-2,-0.7) {$a_4$};
\node at (-6.3,0) {$a_2$};
\node at (-5.3,0) {$a_1$};
\end{tikzpicture}
\caption{An $\Omega.2$ move.}
\label{secondmove}
\end{figure}
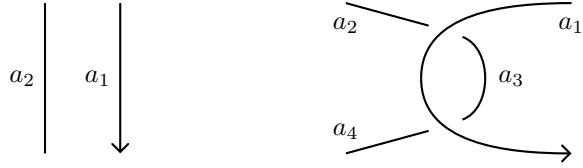

Let $D$ and $D'$ be the diagrams on the left and right of Fig.\ \ref{thirdmove} (respectively). The images under $\rho_D$ of the generators of $\Lambda_{\mu}^{C(D)}$ corresponding to the three pictured crossings of $D$ are 
\begin{align*}
& \rho_1=(1-t_{\kappa(a_2)})a_1+t_{\kappa(a_1)}a_2-a_3 \text{,} \\
& \rho_2=(1-t_{\kappa(a_6)})a_1+t_{\kappa(a_1)}a_6-a_5 \text{, and}\\
& \rho_3=(1-t_{\kappa(a_4)})a_3+t_{\kappa(a_3)}a_5-a_4\text{.}
\end{align*}
The images under $\rho_{D'}$ of the generators of $\Lambda_{\mu}^{C(D')}$ corresponding to the three pictured crossings of $D'$ are 
\begin{align*}
& \rho'_1=(1-t_{\kappa(a_2)})a_1+t_{\kappa(a_1)}a_2-a_3 \text{,} \\
& \rho'_2=(1-t_{\kappa(a_4)})a_1+t_{\kappa(a_1)}a_7-a_4 \text{,  and} \\
& \rho'_3=(1-t_{\kappa(a_6)})a_2+t_{\kappa(a_2)}a_6-a_7\text{.}
\end{align*}

Let $F:\Lambda_{\mu}^{A(D)} \to \Lambda_{\mu}^{A(D')}$ be the $\Lambda_{\mu}$-linear map with $F(a)=a$ for $a \neq a_5$, and  $F(a_5)=a_4+t_{\kappa(a_1)}(a_6-a_7)$. Then the image of $F$ includes every $a \neq a_7 \in A(D')$, because each such $a$ equals $F(a)$, and the image of $F$ also includes $F(t_{\kappa(a_1)}^{-1} \cdot(a_4-a_5+t_{\kappa(a_1)}a_6))=a_7$. It follows that $F$ is surjective. In addition, $F$ is injective: if $x \neq 0 \in \Lambda_{\mu}^{A(D)}$ then either the $a_5$ coordinate of $x$ is nonzero, in which case the $a_7$ coordinate of $F(x)$ is nonzero, or else the $a_5$ coordinate of $x$ is $0$, in which case $x$ and $F(x)$ are precisely the same as linear combinations of generators. We conclude that $F$ is an isomorphism of $\Lambda_{\mu}$-modules.

Notice that $\kappa(a_2)=\kappa(a_3)$ and $\kappa(a_4)=\kappa(a_5)=\kappa(a_6)=\kappa(a_7)$, so we have
\begin{align*}
& F(\rho_1)=\rho'_1, \\
& F(\rho_2)=(1-t_{\kappa(a_6)})a_1+t_{\kappa(a_1)}a_6-a_4-t_{\kappa(a_1)}a_6+t_{\kappa(a_1)}a_7=\rho'_2\text{,  and} \\
& F(\rho_3)=(1-t_{\kappa(a_4)})a_3+t_{\kappa(a_3)}(a_4+t_{\kappa(a_1)}(a_6-a_7))-a_4 \\
& \qquad \quad =(t_{\kappa(a_4)}-1)\rho'_1+(1-t_{\kappa(a_2)})\rho'_2+t_{\kappa(a_1)}\rho'_3.
\end{align*}
It follows that $
F(\rho_D(\Lambda_{\mu}^{C(D)}))=\rho_{D'}(\Lambda_{\mu}^{C(D')})$, so $F$ induces an isomorphism $f$ between the cokernels of $\rho_D$ and $\rho_{D'}$. The equation $\Phi_D=\Phi_{D'} F$ follows directly from the definition of $F$, and it implies that  $\phi_D=\phi_{D'} f$. As in the first two cases, $\phi_{D'}=\phi_D f$ implies that $f(U(D))=U(D')$, and $f$ defines a quandle isomorphism between $U(D)$ and $U(D')$.

\begin{figure} 
\centering
\begin{tikzpicture} [>=angle 90]  
\draw [thick] [->] (2.4,-1.2) -- (-2.4,1.2); 
\draw [thick] [->] (.12,.06) -- (2.4,1.2); 
\draw [thick] (-.12,-.06) -- (-2.4,-1.2); 
\draw [thick] (-2.4,0) to [out=0, in=210] (-1.55,.63);
\draw [thick] (-1.32,.78) to [out=30,in=150] (1.32,.78);
\draw [thick] (2.4,0) to [out=180, in=-30] (1.55,.63);
\draw [thick] [->] (-4.8,-1.2) -- (-9.6,1.2);
\draw [thick] [->] (-7.08,.06) -- (-4.8,1.2);
\draw [thick] (-7.32,-.06) -- (-9.6,-1.2);
\draw [thick] (-9.6,0) to [out=0, in=-210] (-8.75,-.63);
\draw [thick] (-8.52,-.78) to [out=-30,in=-150] (-5.88,-.78);
\draw [thick] (-4.8,0) to [out=180, in=30] (-5.65,-.63);
\node at (-4.98,-.84) {$a_1$};
\node at (-5.4,0) {$a_6$};
\node at (-4.98,.78) {$a_2$};
\node at (-7.2,-.9) {$a_5$};
\node at (-9.48,-.84) {$a_3$};
\node at (-9,0) {$a_4$};
\node at (2.22,-.84) {$a_1$};
\node at (1.8,0) {$a_6$};
\node at (2.22,.78) {$a_2$};
\node at (0,.9) {$a_7$};
\node at (-2.28,-.84) {$a_3$};
\node at (-1.8,0) {$a_4$};
\end{tikzpicture}
\caption{An $\Omega.3$ move.}
\label{thirdmove}
\end{figure}
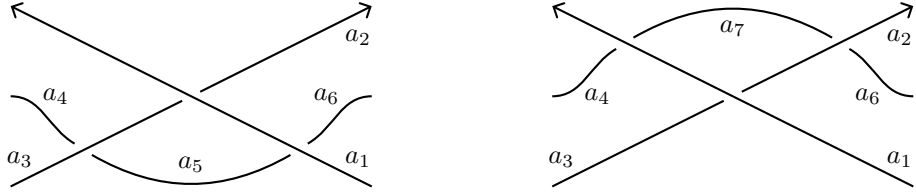

In this case each of the sets $f\gamma_D(A(D)),\gamma_{D'}(A(D'))$ has one element that is not included in the other: $f\gamma_D(A(D))$ contains $f\gamma_D(a_5)$, and $\gamma_{D'}(A(D'))$ contains $\gamma_{D'}(a_7)$. Considering the element $\rho'_3$ of the image of $\rho_{D'}$, we have
\begin{align*}
& \gamma_{D'}(a_7)=(1-t_{\kappa(a_6)})\gamma_{D'}(a_2) +t_{\kappa(a_2)}\gamma_{D'}(a_6) \\
& = \gamma_{D'}(a_6) \triangleright \gamma_{D'}(a_2)= f(\gamma_{D}(a_6) \triangleright \gamma_{D}(a_2)) \text{,}
\end{align*}
so $\gamma_{D'}(a_7) \in f(Q_A(D))$. And considering the element $\rho'_2$ of the image of $\rho_{D'}$, we have
\[
f\gamma_D(a_5)=\gamma_{D'}(a_4+t_{\kappa(a_1)}(a_6-a_7))=\gamma_{D'}(t_{\kappa(a_1)}a_6+(1-t_{\kappa(a_4)})a_1) \text{,}
\]
so since $\kappa(a_4)=\kappa(a_6)$, $f(\gamma_D(a_5))=\gamma_{D'}(a_6) \triangleright \gamma_{D'}(a_1) \in Q_A(D')$. It follows that $f(Q_A(D))=Q_A(D').$
\end{proof}

\section{Quandle-based presentations of $M_A(L)$}
\label{seclemm}

Let $D$ be a diagram of a link $L$. The fundamental quandle $Q(L)$ is obtained by first constructing the set of all words obtained from elements of $A(D)$ using the binary operation symbols $\triangleright$ and $\triangleright^{-1}$, and then modding out by the equivalence relation generated by the quandle axioms (Q1), (Q2) and (Q3), along with the requirement that at each crossing of $D$ as illustrated in Fig.\ \ref{crossfig}, $a_3=a_2 \triangleright a_1$ and $a_2=a_3 \triangleright^{-1} a_1$. Joyce \cite{J} and Matveev \cite{M} proved that up to isomorphism, the fundamental quandle is a link type invariant.

\begin{lemma}
\label{lem12}
 There is a surjective quandle homomorphism $\sigma:Q(L) \to Q_A(L)$, which has $\sigma(a)=\gamma_D(a)$ $\forall a \in A(D)$.
\end{lemma}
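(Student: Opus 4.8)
The plan is to construct $\sigma$ by the universal property of the fundamental quandle. Recall that $Q(L)$ is presented as the free quandle on the generating set $A(D)$, modulo the relations $a_3 = a_2 \triangleright a_1$ and $a_2 = a_3 \triangleright^{-1} a_1$ at each crossing of $D$ (drawn as in Fig.\ \ref{crossfig}). So to define a quandle homomorphism out of $Q(L)$ it suffices to (i) specify the images of the generators in some target quandle, and (ii) check that those images satisfy the defining crossing relations there. I would take the target quandle to be $Q_A(L) \subseteq U(L)$, and send each generator $a \in A(D)$ to $\gamma_D(a)$. This is a legitimate choice of target because $\gamma_D(a) \in Q_A(L)$ for every $a \in A(D)$, by the very definition of $Q_A(L)$ (Definition \ref{multiq}) as the minimal subquandle of $U(L)$ containing all the $\gamma_D(a)$. (One should also note $\gamma_D(a)\in U(L)$, i.e.\ $\phi_L(\gamma_D(a))+1 = t_{\kappa(a)}$ is a unit of $\Lambda_\mu$, which is why $Q_A(L)$ makes sense.)

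The verification in step (ii) is exactly the content of Remark \ref{rem1}: there it is computed, using $\gamma_D\rho_D(c)=0$ and $\kappa(a_2)=\kappa(a_3)$, that at each crossing $c$ of $D$ with arcs labelled as in Fig.\ \ref{crossfig} one has $\gamma_D(a_2) \triangleright \gamma_D(a_1) = \gamma_D(a_3)$ and $\gamma_D(a_3) \triangleright^{-1} \gamma_D(a_1) = \gamma_D(a_2)$, where $\triangleright$ and $\triangleright^{-1}$ are the operations of Propositions \ref{multiq1} and \ref{multiq2}. Thus the assignment $a \mapsto \gamma_D(a)$ respects all the defining relations of $Q(L)$, and the universal property yields a well-defined quandle homomorphism $\sigma : Q(L) \to Q_A(L)$ with $\sigma(a) = \gamma_D(a)$ for all $a \in A(D)$.

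For surjectivity, note that $\sigma(Q(L))$ is a subquandle of $Q_A(L)$ (the homomorphic image of a quandle is a subquandle) and it contains $\gamma_D(a) = \sigma(a)$ for every $a \in A(D)$. Since $Q_A(L)$ is by definition the \emph{minimal} subquandle of $U(L)$ containing all the $\gamma_D(a)$, minimality forces $\sigma(Q(L)) = Q_A(L)$, so $\sigma$ is onto.

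The only point requiring genuine care — and hence the main obstacle — is the well-definedness in step (ii), namely confirming that the two crossing identities of Remark \ref{rem1} do indeed hold with the correct orientation/sign conventions, so that the quandle relations of $Q(L)$ are matched precisely (and not, say, up to swapping the roles of $a_2$ and $a_3$, or up to replacing $\triangleright$ by $\triangleright^{-1}$). This is already carried out in Remark \ref{rem1}, so the proof amounts to assembling that computation with the universal property of $Q(L)$ and the minimality in Definition \ref{multiq}; beyond that it is routine, and the argument is independent of the chosen diagram $D$ because $Q(L)$ and $Q_A(L)$ are themselves diagram-independent by Proposition \ref{qinv} and \cite{J, M}.
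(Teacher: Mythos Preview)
Your proposal is correct and follows essentially the same approach as the paper's proof: both invoke the universal property of $Q(L)$ (the paper phrases it as ``the largest quandle generated by $A(D)$ with the crossing relations''), both cite Remark~\ref{rem1} to verify that the images $\gamma_D(a)$ satisfy those relations in $Q_A(L)$, and both obtain surjectivity from the fact that $Q_A(L)$ is generated by $\gamma_D(A(D))$. Your write-up is simply more explicit about the intermediate steps (e.g.\ why $\gamma_D(a)\in U(L)$, and the minimality argument for surjectivity).
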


\begin{proof}
 The fundamental quandle is the largest quandle generated by $A(D)$ that has $a_3=a_2 \triangleright a_1$ and $a_2=a_3 \triangleright^{-1} a_1$ at each crossing of $D$. The lemma follows, because $Q_A(L)$ is a quandle generated by $\gamma_D(A(D))$ and according to Remark \ref{rem1} of the introduction, $\gamma_D(a_3) = \gamma_D(a_2) \triangleright \gamma_D(a_1)$ and $\gamma_D(a_2) = \gamma_D(a_3) \triangleright^{-1} \gamma_D(a_1)$ at each crossing of $D$.
\end{proof}

Here is a standard definition.

\begin{definition}
An \emph{orbit} in a quandle $Q$ is an equivalence class of elements of $Q$ under the equivalence relation defined by $x \sim x \triangleright y \sim x \triangleright^{-1} y$ $\forall x,y \in Q$.
\end{definition}

\begin{lemma}
\label{lem14}
If $L$ is a link then $Q(L)$ has one orbit for each component of $L$. The same holds for $Q_A(L)$.
\end{lemma}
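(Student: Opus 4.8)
The plan is to treat $Q(L)$ and $Q_A(L)$ separately, in each case establishing an upper bound of $\mu$ orbits by showing that the arcs of a single component of $L$ determine a single orbit, and a matching lower bound by exhibiting an orbit-respecting map that detects the component index. A preliminary remark I would record first: in any quandle generated by a set $S$, every element lies in the orbit of some element of $S$. This follows by induction on the length of a word representing an element $w$, since if $w = u \triangleright v$ or $w = u \triangleright^{-1} v$ then $w$ is orbit-equivalent to $u$, which admits a shorter representative. Applying this to $Q(L)$ (generated by $A(D)$) and to $Q_A(L)$ (generated by $\gamma_D(A(D))$), it is enough to understand which generators share an orbit.

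For the upper bound I would fix a component $K_i$ and traverse it in $D$: two arcs of $D$ that are consecutive along $K_i$ meet at a crossing where they appear as the arcs $a_2$ and $a_3$ of Fig.~\ref{crossfig}. Since $a_3 = a_2 \triangleright a_1$ in $Q(L)$, and $\gamma_D(a_3) = \gamma_D(a_2) \triangleright \gamma_D(a_1)$ in $Q_A(L)$ by Remark~\ref{rem1}, consecutive arcs are orbit-equivalent; hence all arcs $a$ with $\kappa(a) = i$ lie in one orbit of $Q(L)$, and likewise all $\gamma_D(a)$ with $\kappa(a) = i$ lie in one orbit of $Q_A(L)$. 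Combined with the preliminary remark, this shows each of $Q(L)$ and $Q_A(L)$ has at most $\mu$ orbits, and that each orbit meets the generators coming from exactly one component.

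For the lower bound on $Q_A(L)$ I would use the computation in Section~\ref{secq} that $\phi_L(x \triangleright y) = \phi_L(x) = \phi_L(x \triangleright^{-1} y)$, which says precisely that $\phi_L$ is constant on each orbit of $Q_A(L)$. Since $\phi_L(\gamma_D(a)) = t_{\kappa(a)} - 1$ and the elements $t_1 - 1, \dots, t_\mu - 1$ are pairwise distinct in $\Lambda_\mu$, generators coming from different components of $L$ cannot share an orbit, so $Q_A(L)$ has at least $\mu$ orbits, hence exactly $\mu$. For $Q(L)$ I would then invoke the surjective quandle homomorphism $\sigma \colon Q(L) \to Q_A(L)$ of Lemma~\ref{lem12}: a surjective quandle homomorphism carries the defining relation $x \sim x \triangleright y$ to itself and so induces a surjection of orbit sets, whence $Q(L)$ has at least as many orbits as $Q_A(L)$, namely at least $\mu$, hence exactly $\mu$. (Alternatively, $\kappa$ extends to a quandle homomorphism from $Q(L)$ onto the trivial quandle on $\{1,\dots,\mu\}$, whose orbits are singletons, separating the components of $Q(L)$ directly.)

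The one thing to be careful about is the bookkeeping in the traversal argument — recording that passing an over-crossing leaves the current arc unchanged while passing an under-crossing replaces it by the arc on the opposite side of the overstrand, which is exactly the partner arc ($a_2$ or $a_3$) of that crossing — but this is routine. The genuinely useful observation, and the step I expect to do the real work, is that $\phi_L$ is an orbit invariant of $Q_A(L)$; once that is noticed, the component index is read off immediately, and the $Q(L)$ case is then a formal consequence via $\sigma$.
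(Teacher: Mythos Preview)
Your proposal is correct and follows essentially the same route as the paper: establish that every orbit contains a generator, walk along each component to merge its arcs into one orbit, use constancy of $\phi_L$ on orbits to separate the orbits of $Q_A(L)$, and pull this separation back to $Q(L)$ via $\sigma$. The only cosmetic difference is that the paper recomputes $\phi_L(x \triangleright y)=\phi_L(x)$ inside the proof rather than citing Section~\ref{secq}, and does not mention your alternative trivial-quandle argument.
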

\begin{proof}
The quandle $Q(L)$ is generated by $A(D)$, so every orbit in $Q(L)$ contains at least one element of $A(D)$. For the same reason, every orbit in $Q_A(L)$ contains at least one element of $\gamma_D(A(D))$.

Suppose $a \in A(D)$ has $\kappa(a)=k$. Every other arc $a'$ with $\kappa(a')=k$ is related to $a$ under $\sim$ in $Q(L)$, because we can construct a sequence of $\triangleright$ and $\triangleright^{-1}$ operations in $Q(L)$ that lead from $a$ to $a'$: we simply walk along $K_k$ from $a$ to $a'$, and apply $\triangleright$ or $\triangleright^{-1}$ each time we pass from one arc to another, as dictated by the orientation of the overpassing arc. The same reasoning shows that a single orbit of $Q_A(L)$ contains both $\gamma_D(a)$ and $\gamma_D(a')$.

It might seem possible that arcs from two different components of $L$ could be related under $\sim$ in $Q(L)$ or $Q_A(L)$. To guarantee that this is not the case, note that $\phi_L$ is constant on each orbit in $Q_A(L)$: if $x,y \in Q_A(L)$ then
\[
\phi_L(x \triangleright y)= \phi_L ((\phi_L(y)+1)x-\phi_L(x)y) = (\phi_L(y)+1)\phi_L(x)-\phi_L(x)\phi_L(y)=\phi_L(x)
\]
\begin{center} and \end{center}
\[
\phi_L(x \triangleright^{-1} y)=\phi_L((\phi_L(y)+1)^{-1} \cdot (x+\phi_L(x)y))
\]
\[
= (\phi_L(y)+1)^{-1}  (\phi_L(x)+\phi_L(x)\phi_L(y))=(\phi_L(y)+1)^{-1} \phi_L(x) (1+\phi_L(y))=\phi_L(x).
\]

Each $a \in A(D)$ has $\phi_L \gamma_D(a)=t_{\kappa(a)}-1$, so if $a$ and $a'$ are arcs from different components of $L$, then $\phi_L \gamma_D(a) \neq \phi_L \gamma_D(a')$. Therefore $\gamma_D(a)$ and $\gamma_D(a')$ fall into different orbits in $Q_A(L)$. As $\sigma:Q(L) \to Q_A(L)$ is a quandle homomorphism with $\sigma(a)=\gamma_D(a)$ and $\sigma(a')=\gamma_D(a')$, it follows that $a$ and $a'$ fall into different orbits in $Q(L)$. \end{proof}

The situation is quite different in the total multivariate Alexander quandle $U(L)$. This quandle has infinitely many orbits, because there there are infinitely many different $\phi_L(x)$ values for $x \in U(L)$, e.g. $-1\pm t_1^n$ for any integer $n$.

The following notion will be useful.

\begin{definition}
\label{wdef}
A \emph{quandle word} is a sequence of symbols, which can be constructed using these three rules. 
\begin{itemize}
    \item Any element $a \in A(D)$ is a quandle word.
    \item If $W_1$ and $W_2$ are quandle words then so is $(W_1) \triangleright (W_2)$.
    \item If $W_1$ and $W_2$ are quandle words then so is $(W_1) \triangleright^{-1} (W_2)$.
\end{itemize}
\end{definition}

Each quandle word is constructed in a unique way from the elements of $A(D)$, using parentheses and the symbols $\triangleright, \triangleright^{-1}$. We use $x(W)$ to denote the element of $Q(L)$ represented by $W$. As the fundamental quandle $Q(L)$ is generated by the elements of $A(D)$, for each $x \in Q(L)$ we may choose a quandle word $W(x)$ such that $x=x(W(x))$, and $W(x)$ is of the shortest possible length. In general this choice is not unique; there may be distinct quandle words of minimum length that equal $x$ in $Q(L)$. However the ``shortest length'' requirement does imply that $W(a)=a$ $\forall a \in A(D)$.

Each quandle word $W$ has a well-defined interpretation as an element $i(W) \in \Lambda_{\mu}^{A(D)}$, obtained as follows. If $W=a \in A(D)$, then $i(W)=a \in \Lambda_{\mu}^{A(D)}$. If $W$ is $W_1 \triangleright W_2$, then 
\[
i(W)=(\phi_L\gamma_Di(W_2)+1) \cdot i(W_1) - \phi_L\gamma_Di(W_1) \cdot i(W_2)
\]
and if $W$ is $W_1 \triangleright^{-1} W_2$, then
\[
i(W)=(\phi_L \gamma_D i(W_2)+1)^{-1} \cdot (i(W_1)+\phi_L\gamma_Di(W_1) \cdot i(W_2)).
\]

As $\Lambda_{\mu}^{A(D)} \subseteq \Lambda_{\mu}^{Q(L)}$, $i(W)$ and $x(W)$ are both images of $W$ in $\Lambda_{\mu}^{Q(L)}$. Our next lemma relates these two images to each other.

\begin{lemma}
\label{lem16}
Let $[S]$ denote the $\Lambda_{\mu}$-submodule of $\Lambda_{\mu}^{Q(L)}$ generated by $S=S_1 \cup S_2$, where
\[
S_1=\{(\phi_L(\sigma(y))+1)x-\phi_L(\sigma(x))y- (x \triangleright y) \mid x,y \in Q(L)\}
\]
\begin{center} and \end{center}
\[
S_2=\{(\phi_L (\sigma (y))+1)^{-1} \cdot (x+\phi_L(\sigma (x)) y)- (x \triangleright^{-1} y) \mid x,y \in Q(L)\}.
\]
Then every quandle word $W$ has $i(W)-x(W) \in [S]$. 
\end{lemma}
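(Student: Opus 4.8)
The plan is to prove Lemma \ref{lem16} by structural induction on the quandle word $W$, following the recursive definition of $i(W)$ in Definition \ref{wdef}.

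For the base case, if $W=a\in A(D)$, then $i(W)=a=x(W)$ (since $W(a)=a$), so $i(W)-x(W)=0\in[S]$. For the inductive step, suppose $W=W_1\triangleright W_2$ and that $i(W_j)-x(W_j)\in[S]$ for $j=1,2$. I would write
\[
i(W)-x(W)=\bigl(i(W)-z\bigr)+\bigl(z-x(W)\bigr),
\]
where $z=(\phi_L(\sigma(x(W_2)))+1)\cdot x(W_1)-\phi_L(\sigma(x(W_1)))\cdot x(W_2)$ is the ``module-operation value'' applied to the fundamental-quandle elements $x(W_1),x(W_2)$. The second difference $z-x(W)$ is, by definition, a generator of $S_1$ with $x=x(W_1)$ and $y=x(W_2)$, hence lies in $[S]$. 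For the first difference, I would expand $i(W)$ using its recursive formula and $z$ using its formula, and subtract; the result is a $\Lambda_\mu$-linear combination of $i(W_1)-x(W_1)$ and $i(W_2)-x(W_2)$ — with coefficients like $\phi_L\gamma_Di(W_2)+1$, $-\phi_L\gamma_Di(W_1)$, etc. — PLUS correction terms coming from the fact that the scalar coefficients $\phi_L\gamma_Di(W_j)$ appearing in the definition of $i(W)$ are a priori different from the scalars $\phi_L(\sigma(x(W_j)))$ appearing in $z$. The key observation that makes these correction terms vanish is that $\phi_L\gamma_Di(W_j)=\phi_L(\sigma(x(W_j)))$: indeed $\phi_L$ is constant on $[S]$-cosets (more precisely, applying $\phi_L\gamma_D$, which is well-defined on $\Lambda_\mu^{A(D)}$, to both $i(W_j)$ and $x(W_j)$ gives the same value, because the computation in the proof of Lemma \ref{lem14} shows that the module operation $\triangleright$ on $M_A(L)$ preserves $\phi_L$, and $\gamma_D i(W_j)$ equals the result of applying the iterated module operations to the $\gamma_D(a)$'s while $\sigma(x(W_j))$ equals the image of $x(W_j)$ under $Q(L)\to Q_A(L)\subseteq M_A(L)$, and these iterated operations agree with each other on the nose by Remark \ref{rem1} / the definition of $i$). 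So the scalars match, the correction terms disappear, and the first difference becomes a genuine $\Lambda_\mu$-linear combination of $i(W_1)-x(W_1)$ and $i(W_2)-x(W_2)$, each of which is in $[S]$ by the inductive hypothesis; hence it is in $[S]$, and so is $i(W)-x(W)$. The case $W=W_1\triangleright^{-1}W_2$ is entirely parallel, using $S_2$ in place of $S_1$ and the unit $(\phi_L(\sigma(x(W_2)))+1)^{-1}$, which is legitimate because $\phi_L\gamma_Di(W_2)+1=\phi_L(\sigma(x(W_2)))+1$ is the same element of $\Lambda_\mu$ and it is a unit (every element of $Q_A(L)\subseteq U(L)$ satisfies this by Proposition \ref{multiq2}).

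I expect the main obstacle to be the bookkeeping in the inductive step: verifying cleanly that the scalar coefficients $\phi_L\gamma_Di(W_j)$ and $\phi_L(\sigma(x(W_j)))$ coincide, which is what licenses pulling the difference apart as a linear combination landing in $[S]$. This requires noting that $\phi_L\gamma_D$ factors through the quandle structure consistently: one should establish, perhaps as a preliminary remark or by a side induction, that $\gamma_D i(W)$ and $\sigma(x(W))$ are the same element of $M_A(L)$ for every quandle word $W$ — equivalently, that $i(W)-x(W)\in[S]$ implies $\gamma_D i(W)=\sigma(x(W))$ because $\gamma_D$ kills the image of $\rho_D$ and, one checks, $\gamma_D$ (extended $\Lambda_\mu$-linearly to $\Lambda_\mu^{Q(L)}$ via $\sigma$) kills $[S]$ since each generator of $S_1,S_2$ maps under $x\mapsto\sigma(x)$ to a relation that holds in $Q_A(L)\subseteq M_A(L)$. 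In fact the cleanest route may be to prove simultaneously, by one structural induction, both (a) $\gamma_D i(W)=\sigma(x(W))$ in $M_A(L)$ and (b) $i(W)-x(W)\in[S]$; statement (a) for $W_1,W_2$ supplies exactly the scalar equality needed to deduce (b) for $W$, and (b) for $W$ together with the fact that $\gamma_D$ annihilates $[S]$ gives (a) for $W$. Everything else is routine algebraic expansion of the two defining recursions for $i$.
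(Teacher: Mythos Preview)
Your proposal is correct and follows essentially the same approach as the paper: structural induction on $W$, with the inductive step decomposed via the intermediate element $z=(\phi_L(\sigma(x(W_2)))+1)x(W_1)-\phi_L(\sigma(x(W_1)))x(W_2)$, so that $z-x(W)\in S_1$ and $i(W)-z$ is a $\Lambda_\mu$-combination of $i(W_j)-x(W_j)$ once the scalar identity $\phi_L\gamma_D i(W_j)=\phi_L\sigma x(W_j)$ is in hand. The only organizational difference is that the paper disposes of the scalar identity up front, proving $\gamma_D i(W)=\sigma x(W)$ for all $W$ by a separate one-line induction (the recursive definition of $i$ literally encodes the $\triangleright,\triangleright^{-1}$ formulas in $Q_A(L)$, so this follows immediately from Lemma \ref{lem12}), rather than running it simultaneously with the main induction as you suggest; either packaging works.
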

\begin{proof} Note first that the definition of $i$ incorporates the formulas for $\triangleright$ and $\triangleright^{-1}$ in $Q_A(L)$, so an induction based on Lemma \ref{lem12} tells us that every quandle word $W$ has $\gamma_D i (W) = \sigma x(W)$.

If $W=a \in A(D)$ then the assertion of the lemma is obvious, as $i(W)- x(W)=a-a=0$. The proof proceeds using induction on the length of $W$. 
 
 If $W=W_1 \triangleright W_2$ and the lemma holds for $W_1$ and $W_2$, then
\begin{align*}
& i(W)-x(W) =(\phi_L \gamma_D i(W_2)+1)i(W_1) - \phi_L \gamma_D i(W_1) i(W_2)-x(W_1 \triangleright W_2)\\
& =(\phi_L\gamma_Di(W_2)+1)i(W_1) - \phi_L\gamma_Di(W_1) i(W_2)\\
& -(\phi_L \sigma x(W_2)+1)x(W_1)+\phi_L \sigma x(W_1)x(W_2)\\
& +(\phi_L \sigma x(W_2)+1)x(W_1)-\phi_L \sigma x(W_1)x(W_2) -x(W_1 \triangleright W_2)\\
& =(\phi_L \sigma x(W_2)+1) \cdot (i(W_1) - x(W_1))\\
& - \phi_L \sigma x(W_1) \cdot (i(W_2)-x(W_2))\\
& +(\phi_L \sigma x (W_2)+1)x(W_1)-\phi_L \sigma x(W_1)x(W_2) -x(W_1 \triangleright W_2) \text{,}
\end{align*}
the sum of three elements of $[S]$.

If $W=W_1 \triangleright^{-1} W_2$ and the lemma holds for $W_1$ and $W_2$, then
\begin{align*}
& i(W)-x(W) =(\phi_L \gamma_D i(W_2)+1)^{-1} \cdot (i(W_1)+\phi_L\gamma_Di(W_1) i(W_2))-x(W)\\
& = (\phi_L \gamma_D i(W_2)+1)^{-1} \cdot (i(W_1)+\phi_L\gamma_Di(W_1) i(W_2))\\
& -(\phi_L \sigma x(W_2)+1)^{-1} \cdot (x(W_1)+\phi_L \sigma x(W_1) x(W_2))\\
& +(\phi_L \sigma x(W_2)+1)^{-1} \cdot (x(W_1)+\phi_L \sigma x(W_1) x(W_2))-x(W_1 \triangleright^{-1} W_2)\\
& =(\phi_L \sigma x (W_2)+1)^{-1} \cdot (i(W_1)-x(W_1))\\
& +(\phi_L \sigma x(W_2)+1)^{-1} \cdot \phi_L \sigma x(W_1) \cdot (i(W_2)- x(W_2))\\
& +(\phi_L \sigma x(W_2)+1)^{-1} \cdot (x(W_1)+\phi_L \sigma x(W_1) x(W_2))-x(W_1 \triangleright^{-1} W_2).
\end{align*}
Again, this is the sum of three elements of $[S]$.
\end{proof}

\begin{corollary}
\label{cor17}
Let $\widehat{\sigma}:\Lambda_{\mu}^{Q(L)} \to M_A(L)$ be the $\Lambda_{\mu}$-linear map with $\widehat{\sigma}(x)=\sigma(x)$ $\forall x \in Q(L)$. Then $\widehat{\sigma}$ is surjective and its kernel is $[S]$, the submodule of $\Lambda_{\mu}^{Q(L)}$ mentioned in Lemma \ref{lem16}.
\end{corollary}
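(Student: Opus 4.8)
The plan is to show the two halves of the claim: that $\widehat{\sigma}$ is surjective, and that $\ker\widehat{\sigma}=[S]$. Surjectivity is immediate: $M_A(L)=Q_A(L)$-generates-it, and more precisely $Q_A(L)$ is a subset of $M_A(L)$ containing $\gamma_D(A(D))$, so its $\Lambda_\mu$-span contains $\gamma_D(\Lambda_\mu^{A(D)})$, which is all of $M_A(L)$ since $\gamma_D$ is surjective; and $\widehat{\sigma}$ restricted to $Q(L)\subseteq\Lambda_\mu^{Q(L)}$ has image $\sigma(Q(L))=Q_A(L)$. For the kernel, the inclusion $[S]\subseteq\ker\widehat{\sigma}$ is the easy direction: I just need to check that $\widehat{\sigma}$ kills each generator of $S_1$ and $S_2$. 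For a generator of $S_1$, $\widehat{\sigma}\bigl((\phi_L(\sigma(y))+1)x-\phi_L(\sigma(x))y-(x\triangleright y)\bigr)=(\phi_L(\sigma(y))+1)\sigma(x)-\phi_L(\sigma(x))\sigma(y)-\sigma(x\triangleright y)$, and this vanishes because $\sigma$ is a quandle homomorphism into $Q_A(L)\subseteq M_A(L)$, where by definition $\sigma(x)\triangleright\sigma(y)=(\phi_L(\sigma(y))+1)\sigma(x)-\phi_L(\sigma(x))\sigma(y)$; the $S_2$ case is identical with $\triangleright^{-1}$.

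For the reverse inclusion $\ker\widehat{\sigma}\subseteq[S]$, the idea is to use Lemma \ref{lem16} to reduce an arbitrary element of $\ker\widehat{\sigma}$, modulo $[S]$, to something living in the smaller module $\Lambda_\mu^{A(D)}$, and then invoke Definition \ref{almod}. In detail: take $z\in\Lambda_\mu^{Q(L)}$ with $\widehat{\sigma}(z)=0$; write $z=\sum_{j}\lambda_j x_j$ with $\lambda_j\in\Lambda_\mu$ and $x_j\in Q(L)$. For each $x_j$ choose a quandle word $W(x_j)$ with $x(W(x_j))=x_j$; then $i(W(x_j))-x(W(x_j))=i(W(x_j))-x_j\in[S]$ by Lemma \ref{lem16}, so $z\equiv z'\pmod{[S]}$ where $z'=\sum_j\lambda_j\, i(W(x_j))\in\Lambda_\mu^{A(D)}$ (each $i(W)$ lies in $\Lambda_\mu^{A(D)}$ by construction). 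Since $[S]\subseteq\ker\widehat{\sigma}$, we get $\widehat{\sigma}(z')=\widehat{\sigma}(z)=0$; and on $\Lambda_\mu^{A(D)}$ the map $\widehat{\sigma}$ agrees with $\gamma_D$ (both send $a\mapsto\gamma_D(a)$, using $\sigma(a)=\gamma_D(a)$ from Lemma \ref{lem12}), so $\gamma_D(z')=0$, i.e.\ $z'\in\ker\gamma_D=\rho_D(\Lambda_\mu^{C(D)})$. It then remains to see that $\rho_D(\Lambda_\mu^{C(D)})\subseteq[S]$: for a crossing $c$ as in Fig.\ \ref{crossfig}, $\rho_D(c)=(1-t_{\kappa(a_2)})a_1+t_{\kappa(a_1)}a_2-a_3$, and the computation in Remark \ref{rem1} shows precisely that $\rho_D(c)=(\phi_L(\sigma(a_1))+1)a_2-\phi_L(\sigma(a_2))a_1-(a_2\triangleright a_1)$ together with the fact that $a_3=a_2\triangleright a_1$ in $Q(L)$, so $\rho_D(c)$ (more precisely, $\rho_D(c)$ minus the $S_1$-generator attached to the pair $(a_2,a_1)$, which accounts for the term $-(a_2\triangleright a_1)=-a_3$) lies in $[S]$. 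Hence $z'\in[S]$ and therefore $z\in[S]$.

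The main obstacle, and the point to be careful about, is the bookkeeping in the last paragraph: the module $[S]$ lives in $\Lambda_\mu^{Q(L)}$ and its generators involve elements $x\triangleright y$ of $Q(L)$ regarded as basis vectors, whereas $\rho_D(c)$ is literally an element of the sub-free-module $\Lambda_\mu^{A(D)}$; I must verify that the relation $a_3=a_2\triangleright a_1$ holding \emph{in the quandle} $Q(L)$ means $a_3$ and $a_2\triangleright a_1$ are the \emph{same basis element} of $\Lambda_\mu^{Q(L)}$, so that the $S_1$-generator for the pair $(a_2,a_1)$ really is $(\phi_L(\sigma(a_1))+1)a_2-\phi_L(\sigma(a_2))a_1-a_3$, an element of $\Lambda_\mu^{A(D)}$. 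Granting that, the computation in Remark \ref{rem1} — which establishes exactly $(\phi_L(\sigma(a_1))+1)a_2-\phi_L(\sigma(a_2))a_1=t_{\kappa(a_1)}a_2-(t_{\kappa(a_2)}-1)a_1$, i.e.\ $\rho_D(c)+a_3$ — finishes the identification $\rho_D(c)\in[S]$. A secondary point is to confirm that $i(W(x_j))\in\Lambda_\mu^{A(D)}$ unconditionally, i.e.\ that the recursive definition of $i$ never requires inverting a non-unit; this is fine because each inversion is of $\phi_L\gamma_D i(W_2)+1$, and by the identity $\gamma_D i(W)=\sigma x(W)$ from the proof of Lemma \ref{lem16}, this equals $\phi_L(\sigma x(W_2))+1$, which is a unit since $\sigma x(W_2)\in Q_A(L)\subseteq U(L)$. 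With those two observations in place the argument is complete.
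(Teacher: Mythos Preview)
Your proof is correct and follows essentially the same approach as the paper's: the paper also shows surjectivity via the restriction $\widehat{\sigma}|\Lambda_\mu^{A(D)}=\gamma_D$, checks $S\subseteq\ker\widehat{\sigma}$ using that $\sigma$ is a quandle homomorphism, and for the reverse inclusion writes $z=z_1-z_2$ with $z_1=\sum z(x)\,i(W(x))$ and $z_2=\sum z(x)(i(W(x))-x(W(x)))$, arguing $z_2\in[S]$ by Lemma~\ref{lem16} and $z_1\in\ker\gamma_D\subseteq[S]$ via the identification of $\rho_D(c)$ with an $S_1$-generator. Your additional remarks about $a_3=a_2\triangleright a_1$ as basis elements and about the well-definedness of $i(W)$ make explicit points the paper leaves implicit, but the overall structure is the same.
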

\begin{proof}
 Notice that $\Lambda_{\mu}^{A(D)} \subseteq \Lambda_{\mu}^{Q(L)}$, and $\widehat{\sigma} | \Lambda_{\mu}^{A(D)} = \gamma_D$. As $\gamma_D$ is surjective, it follows that $\widehat{\sigma}$ is surjective too. 
 
Lemma \ref{lem12} states that $\sigma$ is a quandle homomorphism, so if $x,y \in Q(L)$ then 
 \begin{align*}
  & \widehat{\sigma} \big((\phi_L(\sigma(y))+1)x-\phi_L(\sigma(x))y-(x \triangleright y) \big)\\
  & =(\phi_L(\sigma(y))+1)\sigma(x)-\phi_L(\sigma(x))\sigma(y)-\sigma(x \triangleright y)  = \sigma(x) \triangleright \sigma(y)-\sigma(x \triangleright y)=0 \text{,}
\end{align*}
\begin{center} and \end{center}
\begin{align*}
  & \widehat{\sigma} \big((\phi_L (\sigma (y))+1)^{-1} \cdot (x+\phi_L(\sigma (x)) y)- (x \triangleright^{-1} y) \big)\\
  & =(\phi_L (\sigma (y))+1)^{-1} \cdot (\sigma(x)+\phi_L(\sigma (x)) \sigma(y))- \sigma(x \triangleright^{-1} y)\\
  & = \sigma(x) \triangleright^{-1} \sigma(y)- \sigma(x \triangleright^{-1} y)  = 0 \text{.}
\end{align*}
It follows that $S \subseteq \ker \widehat{\sigma}$. To complete the proof, we need to show that $\ker \widehat{\sigma}$ is contained in $[S]$.

Suppose $z \in \ker \widehat{\sigma}$. Then $z$ is an element of $\Lambda_{\mu}^{Q(L)}$, so
\[
z = \sum_{x \in Q(L)} z(x)x = \sum_{x \in Q(L)} z(x)x(W(x))
\]
for some coefficients $z(x) \in \Lambda_{\mu}$. Let 
\[
z_1= \sum_{x \in Q(L)} z(x)i(W(x)) \quad \text{and} \quad z_2= \sum_{x \in Q(L)} z(x)(i(W(x))-x(W(x))).
\]
We claim that $z_1$ and $z_2$ are both elements of $[S]$.

Notice that if $c$ is a crossing of $D$ with arcs indexed as in Fig.\ \ref{crossfig}, then 
\[
 \rho_D(c)=(1-t_{\kappa(a_2)})a_1+t_{\kappa(a_1)}a_2-a_3
 \]
 \[
=-\phi_L \sigma(a_2)a_1+(\phi_L\sigma(a_1)+1) a_2-(a_2 \triangleright a_1)
\]
is an element of $S$. It follows that $[S]$ contains $\rho_D(\Lambda_{\mu}^{C(D)}) = \ker \gamma_D$. As
\[
\gamma_D (z_1) = \sum_{x \in Q(L)} z(x) \gamma_D i(W(x)) = \sum_{x \in Q(L)} z(x) \sigma x(W(x))
\]
\[
= \widehat{\sigma} \left(\sum_{x \in Q(L)} z(x) x(W(x)) \right) = \widehat{\sigma}(z) =0 \text{,}
\]
we deduce the claim that $z_1 \in [S]$.

The claim that $z_2 \in [S]$ follows from Lemma \ref{lem16}, which tells us that $i(W(x))-x(W(x)) \in [S]$ for every $x \in Q(L)$. 

The two claims imply that $z=z_1-z_2 \in [S]$. \end{proof}

Summarizing the discussion above, we have the following.

\begin{theorem}
\label{qpres}
Let $L=K_1 \cup \dots \cup K_{\mu}$ be a link with fundamental quandle $Q(L)$. Then there are surjective functions $\kappa:Q(L) \to \{1, \dots, \mu\}$ and $\widehat{\sigma}:\Lambda_{\mu}^{Q(L)} \to M_A(L)$ with the following properties.
\begin{enumerate}
    \item The orbits of $Q(L)$ are the sets $\kappa^{-1}(\{1\}),\dots,\kappa^{-1}(\{\mu\})$.
    \item The restriction $\sigma=\widehat{\sigma}|Q(L)$ is a quandle homomorphism onto $Q_A(L)$.
    \item For each $x \in Q(L)$, $\phi_L  \sigma(x)=t_{\kappa(x)}-1$.
    \item The map $\widehat{\sigma}$ is $\Lambda_{\mu}$-linear, and its kernel is the submodule of $\Lambda_{\mu}^{Q(L)}$ generated by $S=S_1 \cup S_2$, where
    \[
S_1=\{t_{\kappa(y)}x+(1-t_{\kappa(x)})y- (x \triangleright y) \mid x,y \in Q(L)\}
\]
\center{and} 
\[
S_2=\{t_{\kappa(y)}^{-1}x + t_{\kappa(y)}^{-1} (t_{\kappa(x)}-1) y- (x \triangleright^{-1} y) \mid x,y \in Q(L)\}.
\]
\end{enumerate}
\end{theorem}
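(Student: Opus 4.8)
The plan is to assemble Theorem \ref{qpres} directly from the lemmas and corollary already established in this section, checking that each numbered claim is an immediate restatement or a trivial rewriting of something proven above. In other words, the theorem is a ``summary'' statement, and the proof should be a short bookkeeping argument rather than a new calculation.

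First I would define $\kappa:Q(L) \to \{1,\dots,\mu\}$ to be the unique function that is constant on each orbit of $Q(L)$ and sends an arc $a \in A(D)$ to $\kappa(a)$; this is well defined precisely because of Lemma \ref{lem14}, which says each orbit of $Q(L)$ meets $A(D)$ and that two arcs in the same orbit have the same component index (the proof of Lemma \ref{lem14} shows both that walking along a component connects its arcs in one orbit, and that $\phi_L \sigma$ separates arcs of distinct components). This gives claim (1) immediately, since the orbits are then exactly the fibers $\kappa^{-1}(\{k\})$, and surjectivity of $\kappa$ is clear because every component contributes at least one arc. Next, claim (2) is exactly Lemma \ref{lem12} together with the definition of $\widehat{\sigma}$ in Corollary \ref{cor17}: there $\widehat{\sigma}$ is the $\Lambda_\mu$-linear extension of $\sigma$, so $\widehat{\sigma}|Q(L)=\sigma$ is the surjective quandle homomorphism onto $Q_A(L)$. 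Claim (3) is just the computation $\phi_L \sigma(a)=\phi_L \gamma_D(a)=t_{\kappa(a)}-1$ for $a \in A(D)$ (from the introduction), extended to all of $Q(L)$ by the fact, established in the proof of Lemma \ref{lem14}, that $\phi_L$ is constant on orbits of $Q_A(L)$ and $\sigma$ maps orbits of $Q(L)$ into orbits of $Q_A(L)$; hence $\phi_L\sigma(x)=t_{\kappa(x)}-1$ for every $x$.

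The only claim that requires a genuine (though brief) argument is (4). The linearity and surjectivity of $\widehat{\sigma}$, and the description of $\ker\widehat{\sigma}$ as the submodule $[S]$ generated by $S_1 \cup S_2$, are exactly Corollary \ref{cor17}. What remains is to see that the sets $S_1,S_2$ written in Corollary \ref{cor17}, namely
\[
\{(\phi_L(\sigma(y))+1)x-\phi_L(\sigma(x))y-(x\triangleright y) \mid x,y\in Q(L)\}
\]
and its $\triangleright^{-1}$ analogue, coincide with the sets written in the theorem. Here I would invoke claim (3): since $\phi_L\sigma(x)=t_{\kappa(x)}-1$ and $\phi_L\sigma(y)=t_{\kappa(y)}-1$, we have $\phi_L(\sigma(y))+1=t_{\kappa(y)}$ and $-\phi_L(\sigma(x))=1-t_{\kappa(x)}$, so the generic element of $S_1$ is literally $t_{\kappa(y)}x+(1-t_{\kappa(x)})y-(x\triangleright y)$; likewise $(\phi_L(\sigma(y))+1)^{-1}=t_{\kappa(y)}^{-1}$ and $\phi_L(\sigma(x))=t_{\kappa(x)}-1$ turn the generic element of $S_2$ into $t_{\kappa(y)}^{-1}x+t_{\kappa(y)}^{-1}(t_{\kappa(x)}-1)y-(x\triangleright^{-1}y)$. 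So the two descriptions of $S$ generate the same submodule — indeed they are the same set — and Corollary \ref{cor17} finishes the proof.

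The main obstacle, such as it is, is not a hard step but a matter of care: one must be sure that $\kappa$ on $Q(L)$ is genuinely well defined, i.e. that no orbit of $Q(L)$ contains arcs of two different components. This is the content of the last paragraph of the proof of Lemma \ref{lem14}, which uses the fact that $\phi_L$ separates the orbit invariants $t_{\kappa(a)}-1$ in $Q_A(L)$ and then pulls this back along $\sigma$. Once that point is granted, everything else is a substitution of $t_{\kappa(x)}-1$ for $\phi_L\sigma(x)$ and a citation of Lemmas \ref{lem12}, \ref{lem14} and Corollary \ref{cor17}.
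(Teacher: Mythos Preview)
Your proposal is correct and follows essentially the same approach as the paper's own proof: parts 1 and 2 come from Lemmas \ref{lem12} and \ref{lem14}, part 3 from the orbit argument in the proof of Lemma \ref{lem14}, and part 4 by substituting the formula of part 3 into the description of $\ker\widehat{\sigma}$ from Corollary \ref{cor17}. Your write-up is somewhat more explicit about the well-definedness of $\kappa$, but the logical structure is identical.
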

\begin{proof}
Parts 1 and 2 follow directly from Lemmas \ref{lem12} and \ref{lem14}. Part 3 is justified using ideas from the proof of Lemma \ref{lem14}: if $D$ is a diagram of $L$ and $x \in Q(L)$, then there is an $a \in A(D)$ that lies in the same orbit of $Q(L)$ as $x$; then $\phi_L \sigma(x)=\phi_L \sigma(a)=\phi_L \gamma_D(a)=t_{\kappa(a)}-1$. The descriptions of the elements of $S_1$ and $S_2$ given in part 4 are obtained from the descriptions in Lemma \ref{lem16} using the formula of part 3.
\end{proof}

The arguments in this section remain valid if they are modified by replacing everything in $Q(L)$ with its image under $\sigma$ in $Q_A(L)$. For instance the map $\sigma:Q(L) \to Q_A(L)$ is replaced by the identity map $\textup{id}:Q_A(L) \to Q_A(L)$, and if $a \in A(D)$ appears in a quandle word $W$ then $\sigma x(W)$, the element of $Q_A(L)$ represented by $W$, involves $\sigma(a)=\gamma_D(a)$ rather than $a$ itself. We provide the modified statements of Lemma \ref{lem16}, Corollary \ref{cor17} and Theorem \ref{qpres} below, and leave it to the reader to adjust the details of the arguments.

\begin{lemma}
\label{lem19}
Let $[S_A]$ denote the $\Lambda_{\mu}$-submodule of $\Lambda_{\mu}^{Q_A(L)}$ generated by $S_A=S_{A1} \cup S_{A2}$, where
\[
S_{A1}=\{(\phi_L(y)+1)x-\phi_L(x)y- (x \triangleright y) \mid x,y \in Q_A(L)\}
\]
\begin{center} and \end{center}
\[
S_{A2}=\{(\phi_L (y)+1)^{-1} \cdot (x+\phi_L(x) y)- (x \triangleright^{-1} y) \mid x,y \in Q_A(L)\}.
\]
Then every quandle word $W$ has $\sigma i(W)-\sigma x(W) \in [S_A]$. 
\end{lemma}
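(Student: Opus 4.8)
The plan is to follow exactly the structure of the proof of Lemma~\ref{lem16}, but carried out inside $\Lambda_{\mu}^{Q_A(L)}$ rather than $\Lambda_{\mu}^{Q(L)}$, with every occurrence of an arc $a \in A(D)$ replaced by $\sigma(a) = \gamma_D(a)$ and the homomorphism $\sigma \colon Q(L) \to Q_A(L)$ replaced by the identity on $Q_A(L)$. Concretely, for a quandle word $W$ we interpret $i(W)$ and $x(W)$ as before, and then apply $\sigma$ (extended $\Lambda_{\mu}$-linearly to $\Lambda_{\mu}^{Q(L)} \to \Lambda_{\mu}^{Q_A(L)}$, or equivalently read $W$ directly over the generating set $\gamma_D(A(D)) \subseteq Q_A(L)$). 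The point is that $\sigma i(W)$ is the element of $\Lambda_{\mu}^{Q_A(L)}$ obtained by literally evaluating the $i$-recursion with generators $\gamma_D(a)$, and $\sigma x(W) = x(W)$ read in $Q_A(L)$ is the single basis element of $\Lambda_{\mu}^{Q_A(L)}$ named by the quandle element that $W$ represents in $Q_A(L)$.

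The proof is an induction on the length of $W$. The base case $W = a \in A(D)$ is immediate: $\sigma i(a) - \sigma x(a) = \gamma_D(a) - \gamma_D(a) = 0 \in [S_A]$, where $\gamma_D(a)$ on the right is now viewed as a basis element of $\Lambda_{\mu}^{Q_A(L)}$. For the inductive step with $W = W_1 \triangleright W_2$, I would reproduce the same three-term decomposition as in Lemma~\ref{lem16}: add and subtract $(\phi_L x(W_2)+1)\,\sigma x(W_1) - \phi_L x(W_1)\,\sigma x(W_2)$, using the fact (from the induction based on Lemma~\ref{lem12}, applied to the identity homomorphism on $Q_A(L)$) that $\gamma_D \sigma i(W) = \sigma x(W)$ viewed in $M_A(L)$, i.e.\ that the scalars $\phi_L \gamma_D i(W_j)$ coincide with $\phi_L \, x(W_j)$ where $x(W_j)$ is read in $Q_A(L)$. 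This rewrites $\sigma i(W) - \sigma x(W)$ as $(\phi_L x(W_2)+1)(\sigma i(W_1) - \sigma x(W_1)) - \phi_L x(W_1)(\sigma i(W_2) - \sigma x(W_2))$ plus the term $(\phi_L x(W_2)+1)\,x(W_1) - \phi_L x(W_1)\,x(W_2) - x(W_1 \triangleright W_2)$, which is precisely a generator of $[S_{A1}]$ with $x = x(W_1)$, $y = x(W_2) \in Q_A(L)$. By the inductive hypothesis the first two terms lie in $[S_A]$, so the sum does too. The case $W = W_1 \triangleright^{-1} W_2$ is identical with $S_{A2}$ in place of $S_{A1}$, using that $\phi_L(y)+1$ is a unit of $\Lambda_{\mu}$ for $y \in Q_A(L) \subseteq U(L)$, so the $(\phi_L(y)+1)^{-1}$ appearing is legitimate.

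The only genuine subtlety — and the step I would flag as the place to be careful rather than the place that is hard — is the bookkeeping asserted in the first sentence of the proof of Lemma~\ref{lem16}: one must check that interpreting $W$ over $Q_A(L)$ really does give $\sigma i(W)$ and that $\gamma_D \sigma i(W) = \sigma x(W)$ in $M_A(L)$, so that the scalar coefficients $\phi_L \sigma x(W_j)$ used in the three-term split are the same scalars that appear in the definition of $i$. This is an induction on word length using Lemma~\ref{lem12} (that $\sigma$ is a quandle homomorphism): the recursion for $i$ is built from exactly the $\triangleright$, $\triangleright^{-1}$ formulas of $Q_A(L)$, and applying $\sigma$ commutes with those formulas. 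Once this identification is in hand, the algebra is word-for-word the algebra of Lemma~\ref{lem16}, and as the excerpt says, the remaining details can safely be left to the reader.
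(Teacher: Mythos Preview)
Your proposal is correct and matches the paper's approach exactly: the paper does not give a separate proof of Lemma~\ref{lem19} but simply states that the arguments for Lemma~\ref{lem16} ``remain valid if they are modified by replacing everything in $Q(L)$ with its image under $\sigma$ in $Q_A(L)$,'' and leaves the adjustments to the reader. You have supplied precisely those adjustments, including the same three-term decomposition and the same bookkeeping observation about the scalars $\phi_L$ agreeing on $i(W)$ and $x(W)$.
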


\begin{corollary}
\label{cor20}
Let $\widehat{\textup{id}}:\Lambda_{\mu}^{Q_A(L)} \to M_A(L)$ be the $\Lambda_{\mu}$-linear map with $\widehat{\textup{id}}(x)=x$ $\forall x \in Q_A(L)$. Then $\widehat{\textup{id}}$ is surjective and its kernel is $[S_A]$.
\end{corollary}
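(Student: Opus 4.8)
The plan is to run the proof of Corollary~\ref{cor17} essentially word for word, reading every element $x\in Q(L)$ as its image $\sigma(x)\in Q_A(L)$; thus $\widehat{\sigma}$ is replaced by $\widehat{\textup{id}}$, the submodule $[S]$ by $[S_A]$, and the appeal to Lemma~\ref{lem16} by an appeal to Lemma~\ref{lem19}. Surjectivity is immediate: the elements $\gamma_D(a)$, $a\in A(D)$, belong to $Q_A(L)\subseteq\Lambda_{\mu}^{Q_A(L)}$, they generate $M_A(L)$ over $\Lambda_{\mu}$ because $\gamma_D$ is onto, and $\widehat{\textup{id}}$ fixes each of them. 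For the inclusion $[S_A]\subseteq\ker\widehat{\textup{id}}$, recall that $\triangleright$ and $\triangleright^{-1}$ on $Q_A(L)$ are the restrictions of the operations of $U(L)$, which by Propositions~\ref{multiq1} and \ref{multiq2} are computed inside $M_A(L)$ by exactly the formulas $(\phi_L(y)+1)x-\phi_L(x)y$ and $(\phi_L(y)+1)^{-1}\cdot(x+\phi_L(x)y)$ that appear in the definitions of $S_{A1}$ and $S_{A2}$; hence $\widehat{\textup{id}}$ annihilates every generator of $S_A$, so $[S_A]\subseteq\ker\widehat{\textup{id}}$.

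The content is in the reverse inclusion $\ker\widehat{\textup{id}}\subseteq[S_A]$. Given $z\in\ker\widehat{\textup{id}}$, I would use that $\sigma$ is onto and that every element of $Q(L)$ is $x(W)$ for some quandle word $W$ to choose, for each $x\in Q_A(L)$, a quandle word $W_x$ with $\sigma x(W_x)=x$, and then write $z=\sum_x z(x)\,\sigma x(W_x)$ with $z(x)\in\Lambda_{\mu}$. Setting $z_1=\sum_x z(x)\,\sigma i(W_x)$ and $z_2=\sum_x z(x)\,\bigl(\sigma i(W_x)-\sigma x(W_x)\bigr)$ gives $z=z_1-z_2$, and Lemma~\ref{lem19} puts $z_2\in[S_A]$, so everything reduces to showing $z_1\in[S_A]$.

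For that, write $\tilde z_1=\sum_x z(x)\,i(W_x)\in\Lambda_{\mu}^{A(D)}$, so that $z_1=\sigma(\tilde z_1)$. The identity $\gamma_D i(W)=\sigma x(W)$ (observed at the start of the proof of Lemma~\ref{lem16} and equally valid here) gives $\gamma_D(\tilde z_1)=\sum_x z(x)\,\sigma x(W_x)=\widehat{\textup{id}}(z)=0$, so $\tilde z_1\in\ker\gamma_D=\rho_D(\Lambda_{\mu}^{C(D)})$. It then remains to observe that $\sigma\bigl(\rho_D(\Lambda_{\mu}^{C(D)})\bigr)\subseteq[S_A]$: at a crossing $c$ with arcs labeled as in Fig.\ \ref{crossfig}, Remark~\ref{rem1} gives $\gamma_D(a_3)=\gamma_D(a_2)\triangleright\gamma_D(a_1)$, so $\sigma(\rho_D(c))$ is precisely the generator of $S_{A1}$ obtained by taking $x=\gamma_D(a_2)$ and $y=\gamma_D(a_1)$ (using $\phi_L\gamma_D(a_i)=t_{\kappa(a_i)}-1$ and $\kappa(a_2)=\kappa(a_3)$). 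Hence $z_1=\sigma(\tilde z_1)\in[S_A]$, and $z=z_1-z_2\in[S_A]$.

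The only real subtlety --- the ``main obstacle,'' such as it is --- is bookkeeping, not mathematics: since $\gamma_D$ need not be injective on $A(D)$, the free module $\Lambda_{\mu}^{Q_A(L)}$ is genuinely larger than $\Lambda_{\mu}^{A(D)}$, and one must be careful to re-express each linear combination of elements of $Q_A(L)$ through the explicit preimages $i(W_x)\in\Lambda_{\mu}^{A(D)}$ before invoking facts about $\gamma_D$ and $\rho_D$. Once that discipline is in place, the underlying computations are precisely those already carried out for Corollary~\ref{cor17}, which is why the paper leaves them to the reader.
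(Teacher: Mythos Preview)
Your proposal is correct and follows exactly the approach the paper intends: it is the argument for Corollary~\ref{cor17} pushed forward along $\sigma$, with Lemma~\ref{lem19} replacing Lemma~\ref{lem16} and with the crossing relators $\sigma(\rho_D(c))$ identified as elements of $S_{A1}$ via Remark~\ref{rem1}. The only point worth noting is terminological: when you write $z_1=\sigma(\tilde z_1)$ and $\sigma(\rho_D(c))$, you are tacitly using the $\Lambda_\mu$-linear extension of $\sigma|_{A(D)}$ to a map $\Lambda_\mu^{A(D)}\to\Lambda_\mu^{Q_A(L)}$, which is exactly what the paper means by ``replacing everything in $Q(L)$ with its image under $\sigma$''; making that explicit would remove any possible ambiguity, but the mathematics is sound.
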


\begin{theorem}
\label{aqpres}
Let $L=K_1 \cup \dots \cup K_{\mu}$ be a link with fundamental multivariate Alexander quandle $Q_A(L)$. Then there are surjective functions $\kappa:Q_A(L) \to \{1, \dots, \mu\}$ and $\widehat{\textup{id}}:\Lambda_{\mu}^{Q_A(L)} \to M_A(L)$ with the following properties.
\begin{enumerate}
    \item The orbits of $Q_A(L)$ are the sets $\kappa^{-1}(\{1\}),\dots,\kappa^{-1}(\{\mu\})$.
    \item The restriction $\textup{id}=\widehat{\textup{id}}|Q_A(L)$ is the identity map of $Q_A(L)$.
    \item For each $x \in Q_A(L)$, $\phi_L(x)=t_{\kappa(x)}-1$.
    \item The map $\widehat{\textup{id}}$ is $\Lambda_{\mu}$-linear, and its kernel is the submodule of $\Lambda_{\mu}^{Q_A(L)}$ generated by $S_A=S_{A1} \cup S_{A2}$, where
    \[
S_{A1}=\{t_{\kappa(y)}x+(1-t_{\kappa(x)})y- (x \triangleright y) \mid x,y \in Q_A(L)\}
\]
\center{and} 
\[
S_{A2}=\{t_{\kappa(y)}^{-1}x + t_{\kappa(y)}^{-1} (t_{\kappa(x)}-1) y- (x \triangleright^{-1} y) \mid x,y \in Q_A(L)\}.
\]
\end{enumerate}
\end{theorem}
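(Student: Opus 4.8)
The proof is a transcription of the development of Lemmas \ref{lem12}--\ref{lem16}, Corollary \ref{cor17} and Theorem \ref{qpres}, with $Q(L)$ replaced everywhere by $Q_A(L)$, the quandle homomorphism $\sigma\colon Q(L)\to Q_A(L)$ replaced by $\textup{id}\colon Q_A(L)\to Q_A(L)$, and each arc $a\in A(D)$ occurring in a quandle word read as the generator $\gamma_D(a)\in Q_A(L)$. So the plan is: set up the quandle-word machinery over $Q_A(L)$, prove the analog of Lemma \ref{lem16} (namely Lemma \ref{lem19}), deduce Corollary \ref{cor20}, and then read off the four assertions of the theorem.

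The easy points come first. Parts 1 and 2 are immediate: Part 2 holds by the definition of $\widehat{\textup{id}}$, and Part 1 is precisely the content of Lemma \ref{lem14}. For Part 3, the proof of Lemma \ref{lem14} already shows that $\phi_L$ is constant on orbits of $Q_A(L)$ and that every orbit contains a generator $\gamma_D(a)$; since $\phi_L\gamma_D(a)=t_{\kappa(a)}-1$ and $\kappa$ is constant on orbits, $\phi_L(x)=t_{\kappa(x)}-1$ for all $x\in Q_A(L)$. The analog of Lemma \ref{lem12} is trivial, $\textup{id}$ being a quandle homomorphism, and the induction giving $\gamma_D\,i(W)=x(W)$ in $M_A(L)$ for every quandle word $W$ goes through verbatim, since the definition of $i$ already incorporates the $\triangleright$ and $\triangleright^{-1}$ formulas of $Q_A(L)$.

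The substance is Part 4, and it is obtained by repeating the two inductions verbatim. Lemma \ref{lem19} is proved by the same three-line computation as Lemma \ref{lem16}, with $\phi_L\sigma$ replaced by $\phi_L$: for $W=W_1\triangleright W_2$ (resp.\ $W_1\triangleright^{-1}W_2$) one rewrites $i(W)-x(W)$, after adding and subtracting the ``model'' element $(\phi_L(x(W_2))+1)x(W_1)-\phi_L(x(W_1))x(W_2)$ (resp.\ its $\triangleright^{-1}$ analog), as a $\Lambda_\mu$-combination of $i(W_1)-x(W_1)$, $i(W_2)-x(W_2)$ and a generator of $S_{A1}$ (resp.\ $S_{A2}$), the first two of which lie in $[S_A]$ by the inductive hypothesis. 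For Corollary \ref{cor20}: $\widehat{\textup{id}}$ is surjective because its composition with the natural map $\Lambda_\mu^{A(D)}\to\Lambda_\mu^{Q_A(L)}$, $a\mapsto\gamma_D(a)$, is $\gamma_D$, which is onto; $S_A\subseteq\ker\widehat{\textup{id}}$ by the same one-line computation using that $\textup{id}$ is a quandle homomorphism; and for the reverse inclusion one writes $z\in\ker\widehat{\textup{id}}$ as $\sum z(x)x(W(x))$, forms $z_1=\sum z(x)i(W(x))$ and $z_2=\sum z(x)(i(W(x))-x(W(x)))$, observes $z_2\in[S_A]$ by Lemma \ref{lem19}, and observes $z_1\in[S_A]$ because $\gamma_D(z_1)=\widehat{\textup{id}}(z)=0$ while the image of $\ker\gamma_D=\rho_D(\Lambda_\mu^{C(D)})$ in $\Lambda_\mu^{Q_A(L)}$ lies in $[S_A]$: by Remark \ref{rem1} the image of $\rho_D(c)$ is $(1-t_{\kappa(a_2)})\gamma_D(a_1)+t_{\kappa(a_1)}\gamma_D(a_2)-\bigl(\gamma_D(a_2)\triangleright\gamma_D(a_1)\bigr)$, which is exactly the element of $S_{A1}$ given by $x=\gamma_D(a_2)$, $y=\gamma_D(a_1)$. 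Finally the stated descriptions of $S_{A1},S_{A2}$ follow from those in Lemma \ref{lem19} by substituting $\phi_L(x)=t_{\kappa(x)}-1$, just as in Theorem \ref{qpres}.

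I do not expect a genuine obstacle here; the author leaves the adaptation to the reader precisely because every step is a mechanical transcription. The only care needed is notational: one must keep distinct the role of $\sigma$ as ``the map $a\mapsto\gamma_D(a)$'', used to push the word interpretation $i(W)\in\Lambda_\mu^{A(D)}$ into $\Lambda_\mu^{Q_A(L)}$ before it is compared with $x(W)$, from its former role as a quandle homomorphism $Q(L)\to Q_A(L)$ (now the identity); and, as already in the proof of Corollary \ref{cor17}, one should bear in mind that the natural map $\Lambda_\mu^{A(D)}\to\Lambda_\mu^{Q_A(L)}$ induced by $a\mapsto\gamma_D(a)$ need not be injective, so that the telescoping cancellations in the two inductions and in $z=z_1-z_2$ must be carried out inside $\Lambda_\mu^{Q_A(L)}$. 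Once this is maintained, each computation is word-for-word the one already done for $Q(L)$.
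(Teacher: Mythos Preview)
Your proposal is correct and follows exactly the approach the paper intends: the paper itself states that Theorem \ref{aqpres} (together with Lemma \ref{lem19} and Corollary \ref{cor20}) is obtained by transcribing the arguments for Lemma \ref{lem16}, Corollary \ref{cor17} and Theorem \ref{qpres} with $Q(L)$ replaced by $Q_A(L)$ and $\sigma$ by $\textup{id}$, leaving the details to the reader. You have supplied precisely those details, including the one genuinely new bookkeeping point---that the inclusion $\Lambda_\mu^{A(D)}\subseteq\Lambda_\mu^{Q(L)}$ must be replaced by the (possibly non-injective) map $\Lambda_\mu^{A(D)}\to\Lambda_\mu^{Q_A(L)}$ induced by $a\mapsto\gamma_D(a)$, so that the identity $z=z_1-z_2$ and the containment $z_1\in[S_A]$ are to be read after pushing forward along this map.
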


\section{Proof of Theorem \ref{main}}
\label{secproof}
In this section we use the module presentations of Sec.\ \ref{seclemm} to verify the three implications of Theorem \ref{main}. Suppose $L=K_1 \cup \dots \cup K_{\mu}$ and $L'=K'_1 \cup \dots \cup K'_{\mu'}$ are links.

$(1 \implies 2)$: Let $f:Q(L) \to Q(L')$ be a quandle isomorphism. Let $\widehat{f}:\Lambda_{\mu}^{Q(L)} \to \Lambda_{\mu}^{Q(L')}$ be the isomorphism defined by $f$. Also, let $\kappa:Q(L) \to \{1, \dots, \mu\}$, $\kappa':Q(L') \to \{1, \dots, \mu' \}$, $\widehat{\sigma}:\Lambda_{\mu}^{Q(L)} \to M_A(L)$ and $\widehat{\sigma}':\Lambda_{\mu}^{Q(L')} \to M_A(L')$ be the surjections mentioned in Theorem \ref{qpres}.

The orbits of $Q(L)$ and $Q(L')$ are matched to each other by the isomorphism $f$, of course. It follows that $\mu = \mu'$, and we may reindex the components of $L=K_1 \cup \dots \cup K_{\mu}$ so that $\kappa (x)=\kappa'(f(x))$ $\forall x \in Q(L)$. Then part 4 of Theorem \ref{qpres} guarantees that $\widehat{f}(\ker \widehat{\sigma}) = \ker \widehat{\sigma}'$. Hence $\widehat{f}$ defines an isomorphism $\widetilde{f}:M_A(L) \to M_A(L')$ of $\Lambda_{\mu}$-modules.

Suppose $y \in Q_A(L)$; then $y=\widehat{\sigma}(x)$ for some $x \in Q(L)$. As $\widetilde{f}$ is defined by $\widehat{f}$, and $\widehat{f}$ in turn is defined by $f$, $\widetilde{f}(y)=\widetilde{f}(\widehat{\sigma}(x))=\widehat{\sigma}'(f(x))$ is an element of $\widehat{\sigma}'(Q(L'))=Q_A(L')$. Conversely, if $y' \in Q_A(L')$ then $y'=\widehat{\sigma}'(x')$ for some $x' \in Q(L')$. Then $y'=\widehat{\sigma}'(f f^{-1}(x'))=\widetilde{f}(\widehat{\sigma} f^{-1}(x'))$ is an element of $\widetilde{f}(Q_A(L))$. We conclude that $\widetilde{f}(Q_A(L))=Q_A(L')$. As $\widetilde{f}$ is $\Lambda_{\mu}$-linear and the quandle operations of $Q_A(L)$ and $Q_A(L')$ are defined by the formulas of Propositions \ref{multiq1} and \ref{multiq2}, $\widetilde{f}$ restricts to a quandle isomorphism $Q_A(L) \to Q_A(L')$. 

$(2 \implies 3)$: Suppose there is a quandle isomorphism $f:Q_A(L) \to Q_A(L')$. Let $\widehat{f}:\Lambda_{\mu}^{Q_A(L)} \to \Lambda_{\mu}^{Q_A(L')}$ be the isomorphism defined by $f$, and let $\kappa:Q_A(L) \to \{1, \dots, \mu\}$, $\kappa':Q_A(L') \to \{1, \dots, \mu' \}$, $\widehat{\textup{id}}:\Lambda_{\mu}^{Q_A(L)} \to M_A(L)$ and $\widehat{\textup{id}}':\Lambda_{\mu}^{Q_A(L')} \to M_A(L')$ be the surjections mentioned in Theorem \ref{aqpres}.

The isomorphism $f$ matches the orbits of $Q_A(L)$ to the orbits of $Q_A(L')$, so $\mu = \mu'$. If necessary, we may reindex the components of $L=K_1 \cup \dots \cup K_{\mu}$ so that $\kappa (x)=\kappa'(f(x))$ $\forall x \in Q_A(L)$. Then part 4 of Theorem \ref{aqpres} guarantees that $\widehat{f}(\ker \widehat{\textup{id}}) = \ker \widehat{\textup{id}}'$. Therefore $\widehat{f}$ defines an isomorphism $\widetilde{f}:M_A(L) \to M_A(L')$ of $\Lambda_{\mu}$-modules.

As $\kappa (x)=\kappa'(f(x))$ $\forall x \in Q_A(L)$, each $x \in Q_A(L)$ has $\phi_L(x)=t_{\kappa(x)}-1=t_{\kappa'(\widetilde{f}(x))}-1=\phi_{L'}(\widetilde{f}(x))$. The module $M_A(L)$ is generated by $Q_A(L)$, so it follows that $\phi_{L'} \widetilde{f} =\phi_L: M_A(L) \to I_{\mu}$. 

$(3 \implies 4)$: If $f:M_A(L) \to M_A(L')$ is an isomorphism with $\phi_{L'} f = \phi_L$, then $f$ maps $U(L)$ onto $U(L')$. As $f$ is $\Lambda_{\mu}$-linear and the quandle operations of $U(L)$ and $U(L')$ are given by the formulas of Propositions \ref{multiq1} and \ref{multiq2}, it follows that $f$ defines a quandle isomorphism $U(L) \to U(L')$.

\section{Counterexamples}
\label{seccoun}

In this section we provide counterexamples for the converses of the implications $1 \implies 2$ and $3 \implies 4$ of Theorem \ref{main}.

Let $K$ be a knot with Alexander polynomial $1$. Two famous examples of such knots are the Conway and Kinoshita-Terasaka knots; there are infinitely many others (cf.\ \cite[p.\ 167]{Ro}, for instance). Crowell \cite{C2a} proved that for such a knot, the map $\phi_K:M_A(K) \to I_1$ is an isomorphism. 

If $D$ is any diagram of $K$ then every $a \in A(D)$ has $\phi_K \gamma_D(a) = t_1-1$. As $\phi_K$ is an isomorphism, it follows that every $a \in A(D)$ has the same image under $\gamma_D$. Therefore $Q_A(K)$ has only one element. 

We conclude that all knots with Alexander polynomial $1$ have the same fundamental multivariate Alexander quandle, up to isomorphism. These knots are distinguished from each other by their fundamental quandles, of course \cite{J, M}, so they demonstrate that condition 2 of Theorem \ref{main} does not imply condition 1.

 Now, let $L_1$ and $L_2$ be two-component links with homeomorphic exteriors and different Alexander polynomials. There are infinitely many such pairs, but we focus on the pair discussed by Rolfsen \cite[ps.\ 195 and 196]{Ro}, which was also mentioned by Joyce \cite{J}. With one choice of component indices, the Alexander polynomials of $L_1$ and $L_2$ are $1+t_1t_2$ and $1+t_1^3 t_2$ ($1+xy$ and $1+x^3y$, in Rolfsen's notation). The difference between the Alexander polynomials guarantees that the Alexander modules of $L_1$ and $L_2$ are not isomorphic $\Lambda_{2}$-modules, even if the component indices in one link are reversed; it follows that $L_1$ and $L_2$ do not satisfy condition 3 of Theorem \ref{main}. 
 
A homeomorphism between the link exteriors induces an isomorphism $f:G_1 \to G_2$ between the groups of $L_1$ and $L_2$, and this $f$ induces an isomorphism between the integral group rings of the abelianizations of $G_1$ and $G_2$. If both integral group rings are identified with $\Lambda_2$ in the usual way, then this induced isomorphism becomes an automorphism $g$ of $\Lambda_2$ that is not the identity map; with one choice of component indices, the automorphism has $g(t_1)=t_1$ and $g(t_2)=t_1^2t_2$. The link module sequences of $L_1$ and $L_2$ are semilinearly isomorphic, with respect to this automorphism $g$. That is to say, there is a map $g':M_A(L_1) \to M_A(L_2)$ which is an isomorphism of abelian groups, such that $g'(\lambda x)=g(\lambda)g'(x)$ and $\phi_{L_2} g'(x)=g \phi_{L_1}(x)$ $\forall \lambda \in \Lambda_2$ $\forall x \in M_A(L_1)$.

If $x \in U(L_1)$ then $\phi_{L_1}(x)+1= \pm t_1^mt_2^n$ for some integers $m$ and $n$. Therefore $\phi_{L_2}g'(x)+1=g\phi_{L_1}(x)+1= \pm t_1^{m+2n}t_2^n$, so $g'(x) \in U(L_2)$. Conversely, if $g'(x) \in U(L_2)$ then $\phi_{L_2}g'(x)+1 = \pm t_1^mt_2^n$ for some integers $m$ and $n$, so $g\phi_{L_1}(x)+1 = \pm t_1^mt_2^n =g(\pm t_1^{m-2n}t_2^n)$. As $g$ is an automorphism of $\Lambda_2$, it follows that $\phi_{L_1}(x)+1 = \pm t_1^{m-2n}t_2^n$, so $x \in U(L_1)$. We conclude that $g'$ maps $U(L_1)$ onto $U(L_2)$. Using the formulas for $\triangleright$ and $\triangleright^{-1}$ in Propositions \ref{multiq1} and \ref{multiq2}, it follows that $g'$ defines an isomorphism between the quandles $U(L_1)$ and $U(L_2)$. Thus $L_1$ and $L_2$ satisfy condition 4 of Theorem \ref{main}, but not condition 3.

\section{Proposition \ref{red}}
\label{secred}

Proving the first part of Proposition \ref{red} is not difficult. If $L$ and $L'$ have equivalent link module sequences then there is an isomorphism $f:M_A(L) \to M_A(L')$ with $\phi_L=\phi_{L'}f$. An isomorphism $f^{red}:M_A^{red}(L) \to M_A^{red}(L')$ with $\phi_L^{red}=\phi_{L'}^{red} f^{red}$ is obtained directly from $f$, by setting all $t_i=t$ in matrices representing $f$ and the $\rho,\gamma$ maps of Definition \ref{almod}. (Equivalently, the functor $\Lambda \otimes_{\Lambda_{\mu}}-$ is applied to the whole situation.)

The second part of Proposition \ref{red} is verified by exhibiting two links with inequivalent link module sequences, whose reduced link module sequences are equivalent. We do not know of a computer program, reference or web site for the link module sequences of individual links, so we provide a detailed account.

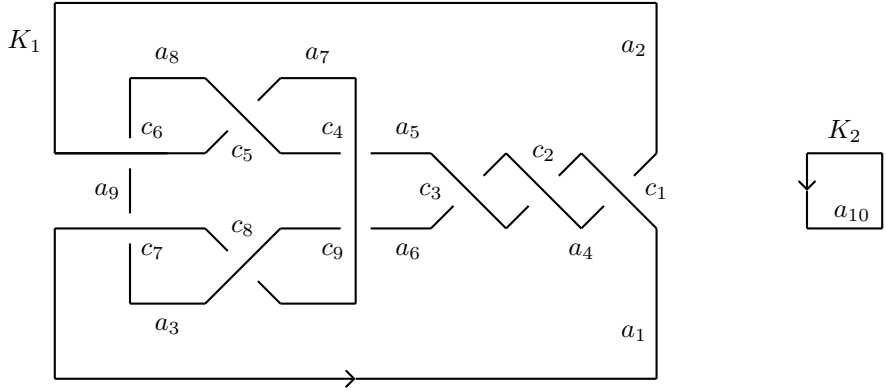
\begin{figure} [bht]
\centering
\begin{tikzpicture} 
\draw [->] [>=angle 90] [thick] (0,0) -- (4,0);
\draw [thick] (10,2) -- (10,2.5);
\draw [thick] (10,2) -- (11,2);
\draw [->] [>=angle 90] [thick] (10,3) -- (10,2.5);
\draw [thick] (11,3) -- (10,3);
\draw [thick] (11,3) -- (11,2);
\draw [thick] (8,0) -- (4,0);
\draw [thick] (0,5) -- (8,5);
\draw [thick] (0,5) -- (0,3);
\draw [thick] (0,0) -- (0,2);
\draw [thick] (2,2) -- (0,2);
\draw [thick] (1,2.2) -- (1,2.8);
\draw [thick] (2,3) -- (0,3);
\draw [thick] (1,1) -- (1,1.8);
\draw [thick] (1,3.2) -- (1,4);
\draw [thick] (4,1) -- (4,4);
\draw [thick] (3,4) -- (4,4);
\draw [thick] (3,1) -- (4,1);
\draw [thick] (1.5,3) -- (0,3);
\draw [thick] (2,3) -- (2.3,3.3);
\draw [thick] (2.7,3.7) -- (3,4);
\draw [thick] (2,2) -- (2.3,1.7);
\draw [thick] (3,1) -- (2.7,1.3);
\draw [thick] (2,1) -- (3,2);
\draw [thick] (2,1) -- (1,1);
\draw [thick] (3,2) -- (3.8,2);
\draw [thick] (5,2) -- (4.2,2);
\draw [thick] (2,4) -- (1,4);
\draw [thick] (2,4) -- (3,3);
\draw [thick] (3.8,3) -- (3,3);
\draw [thick] (4.2,3) -- (5,3);
\draw [thick] (5,3) -- (6,2);
\draw [thick] (6,3) -- (7,2);
\draw [thick] (7,3) -- (8,2);
\draw [thick] (6,2) -- (6.3,2.3);
\draw [thick] (7,3) -- (6.7,2.7);
\draw [thick] (5,2) -- (5.3,2.3);
\draw [thick] (6,3) -- (5.7,2.7);
\draw [thick] (7,2) -- (7.3,2.3);
\draw [thick] (8,3) -- (7.7,2.7);
\draw [thick] (8,3) -- (8,5);
\draw [thick] (8,2) -- (8,0);
\node at (7.7,0.6) {$a_1$};
\node at (7.7,4.4) {$a_2$};
\node at (7,1.7) {$a_4$};
\node at (4.7,3.3) {$a_5$};
\node at (4.7,1.7) {$a_6$};
\node at (3.5,4.3) {$a_7$};
\node at (1.5,4.3) {$a_8$};
\node at (1.5,0.7) {$a_3$};
\node at (0.7,2.5) {$a_9$};
\node at (10.6,2.2) {$a_{10}$};
\node at (8,2.5) {$c_1$};
\node at (6.5,3) {$c_2$};
\node at (5,2.5) {$c_3$};
\node at (3.7,3.3) {$c_4$};
\node at (2.5,3) {$c_5$};
\node at (1.3,3.3) {$c_6$};
\node at (1.3,1.7) {$c_7$};
\node at (2.5,2) {$c_8$};
\node at (3.7,1.7) {$c_9$};
\node at (-0.4,4.5) {$K_1$};
\node at (10.5,3.3) {$K_2$};
\end{tikzpicture}
\caption{The link $L$.}
\label{knotsfig}
\end{figure}

Let $L$ be the split union of the knot $9_{46}$ and an unknot, with arcs and crossings in the diagram $D$ indexed as in Fig.\ \ref{knotsfig}. We analyze the link module sequence of $L$ by simplifying the Alexander module presentation given in Definition \ref{almod}. 

First, we eliminate six of the ten generators as follows.
\begin{align*}
    & \text{using }\rho_D(c_7): t_1 \gamma_D(a_9)= (t_1-1)\gamma_D(a_1) + \gamma_D(a_3)\\
    & \text{using }\rho_D(c_1): \gamma_D(a_4)=(1-t_1)\gamma_D(a_1) + t_1 \gamma_D(a_2)\\
    & \text{using }\rho_D(c_6): \gamma_D(a_8)=t_1 \gamma_D(a_9)+(1-t_1)\gamma_D(a_2)\\
    & \qquad = (t_1-1)\gamma_D(a_1) +(1-t_1)\gamma_D(a_2)+\gamma_D(a_3)
    \end{align*}
    \begin{align*}
    & \text{using }\rho_D(c_5): t_1 \gamma_D(a_7)=\gamma_D(a_2)+(t_1-1)\gamma_D(a_8) \\
    & \qquad = (t_1^2-2t_1 +1)\gamma_D(a_1) +(2t_1 - t_1^2)\gamma_D(a_2)+(t_1-1)\gamma_D(a_3)\\
    & \text{using }\rho_D(c_4):  \gamma_D(a_5)=(1-t_1)\gamma_D(a_7)+t_1\gamma_D(a_8)\\
    & \qquad =(t_1^{-1}-1)t_1\gamma_D(a_7)+t_1\gamma_D(a_8) \\
    & \qquad = (t_1^{-1}-3+2t_1)\gamma_D(a_1) +(2-2t_1)\gamma_D(a_2)+(2-t_1^{-1})\gamma_D(a_3)\\
    & \text{using }\rho_D(c_9): \gamma_D(a_6)=(1-t_1)\gamma_D(a_7)+t_1\gamma_D(a_3)\\
    & \qquad = (t_1^{-1}-1)t_1\gamma_D(a_7)+t_1\gamma_D(a_3)\\
    & \qquad = (t_1^{-1}-3+3t_1-t_1^2)\gamma_D(a_1)+(2-3t_1+t_1^2)\gamma_D(a_2)+(2-t_1^{-1})\gamma_D(a_3)
\end{align*}

We conclude that $M_A(L)$ is generated by $\gamma_D(a_1),\gamma_D(a_2),\gamma_D(a_3)$ and $\gamma_D(a_{10})$, subject to the relations $0=\gamma_D \rho_D(c_2),0=\gamma_D \rho_D(c_3)$ and $0=\gamma_D \rho_D(c_8)$:
\begin{align*}
    & 0=\gamma_D \rho_D(c_2) =(1-t_1) \gamma_D(a_4)+ t_1\gamma_D(a_5) - \gamma_D(a_1)\\
    & \qquad = (1-5t_1+3t_1^2)\gamma_D(a_1)+(3t_1-3t_1^2)\gamma_D(a_2)+(2t_1-1)\gamma_D(a_3)\\
    & 0=\gamma_D \rho_D(c_3) =(1-t_1) \gamma_D(a_5)+ t_1\gamma_D(a_4) - \gamma_D(a_6)\\
    & \qquad = (-1+3t_1-2t_1^2)\gamma_D(a_1)+(2t_1^2-t_1)\gamma_D(a_2)+(-2t_1+1)\gamma_D(a_3) \\
    & 0=\gamma_D \rho_D(c_8) =(1-t_1) \gamma_D(a_3)+ t_1\gamma_D(a_7) - \gamma_D(a_1)\\
    & \qquad = (t_1^2-2t_1)\gamma_D(a_1)+(2t_1-t_1^2)\gamma_D(a_2)
\end{align*}
Rewriting the presentation in terms of the generators  $\gamma_D(a_1),x=\gamma_D(a_2)-\gamma_D(a_1),y=\gamma_D(a_3)-t_1\gamma_D(a_2)+(t_1-1)\gamma_D(a_1)$ and $\gamma_D(a_{10})$, we obtain the following relations. 
\begin{align*}
    & 0= (2t_1-t_1^2)x-(-2t_1+1)y\\
    & 0=(-2t_1+1)y \\
    & 0=(2t_1-t_1^2)x
\end{align*}
The first relation is redundant, so
\begin{equation}
\label{module1}
M_A(L)\cong \Lambda_2 \oplus (\Lambda_2/(2t_1-t_1^2)) \oplus (\Lambda_2/(2t_1-1)) \oplus \Lambda_2 \text{,}
\end{equation}
with the four summands generated by $\gamma_D(a_1),x,y$ and $\gamma_D(a_{10})$ respectively. The map $\phi_L$ is given by $\phi_L(\gamma_D(a_1))=t_1-1$, $\phi_L(x)=\phi_L(\gamma_D(a_2)-\gamma_D(a_1))=t_1-1-(t_1-1)=0$, $\phi_L(y)=\phi_L(\gamma_D(a_3)-t_1\gamma_D(a_2)+(t_1-1)\gamma_D(a_1))=t_1-1-t_1(t_1-1)+(t_1-1)^2=0$ and $\phi_L(\gamma_D(a_1))=t_2-1$.

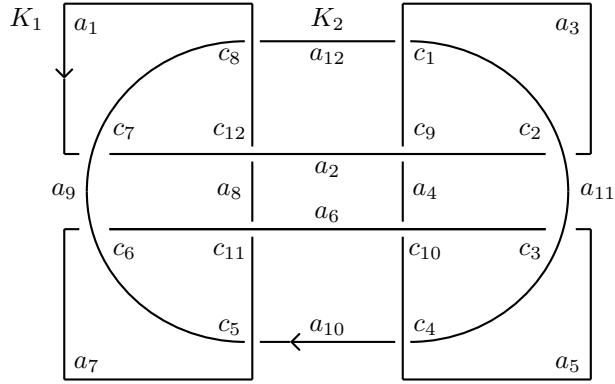
\begin{figure} 
\centering
\begin{tikzpicture} 
\draw [thick] (3,2.1) -- (3,2.9);
\draw [thick] (5,2.1) -- (5,2.9);
\draw [thick] (5,3.1) -- (5,5);
\draw [thick] (7.5,5) -- (5,5);
\draw [thick] (7.5,5) -- (7.5,3);
\draw [thick] (7.3,3) -- (7.5,3);
\draw [thick] (3,3.1) -- (3,5);
\draw [thick] (4.9,4.5) -- (3.1,4.5);
\draw [->] [>=angle 90] [thick] (4.9,.5) -- (3.5,.5);
\draw [thick] (3.1,.5) -- (3.5,.5);
\draw [thick] (5,0) -- (5,1.9);
\draw [thick] (3,0) -- (3,1.9);
\draw [thick] (1.1,2) -- (6.9,2);
\draw [thick] (1.1,3) -- (6.9,3);
\draw [thick] (7.3,2) -- (7.5,2);
\draw [thick] (7.5,2) -- (7.5,0);
\draw [thick] (5,0) -- (7.5,0);
\draw [thick] (1.1,2) -- (6.9,2);
\draw [thick] (0.5,5) -- (3,5);
\draw [->] [>=angle 90] [thick] (0.5,5) -- (0.5,4);
\draw [thick] (0.5,3) -- (0.5,4);
\draw [thick] (.7,3) -- (0.5,3);
\draw [thick] (0.5,0) -- (3,0);
\draw [thick] (0.5,0) -- (0.5,2);
\draw [thick] (.7,2) -- (0.5,2);
\draw [thick] (5.1,0.5) to [out=0, in=-90] (7.2,2.5);
\draw [thick] (7.2,2.5) to [out=90, in=0] (5.1,4.5);
\draw [thick] (2.9,.5) to [out=180, in=-90] (0.8,2.5);
\draw [thick] (2.9,4.5) to [out=180, in=90] (0.8,2.5);
\node at (0.8,4.7) {$a_1$};
\node at (4,2.8) {$a_2$};
\node at (7.2,4.7) {$a_3$};
\node at (5.3,2.5) {$a_4$};
\node at (7.2,0.2) {$a_5$};
\node at (4,2.2) {$a_6$};
\node at (0.8,0.2) {$a_7$};
\node at (2.7,2.5) {$a_8$};
\node at (0.5,2.5) {$a_9$};
\node at (4,.7) {$a_{10}$};
\node at (7.6,2.5) {$a_{11}$};
\node at (4,4.3) {$a_{12}$};
\node at (5.3,4.3) {$c_1$};
\node at (6.7,3.3) {$c_2$};
\node at (6.7,1.7) {$c_3$};
\node at (5.3,0.7) {$c_4$};
\node at (2.7,0.7) {$c_5$};
\node at (1.3,1.7) {$c_6$};
\node at (1.3,3.3) {$c_7$};
\node at (2.7,4.3) {$c_8$};
\node at (5.3,3.3) {$c_9$};
\node at (5.3,1.7) {$c_{10}$};
\node at (2.7,1.7) {$c_{11}$};
\node at (2.7,3.3) {$c_{12}$};
\node at (0,4.8) {$K_1$};
\node at (4,4.8) {$K_2$};
\end{tikzpicture}
\caption{Milnor's link, $M$.}
\label{milsfig}
\end{figure}

Now, let $M$ be the link with the diagram $E$ pictured in Fig.\ \ref{milsfig}. This link was mentioned by Milnor \cite{Mi}; a distinctive property is that all of its $\bar \mu$-invariants equal $0$.

Again, we analyze the Alexander module by simplifying the presentation from Definition \ref{almod}. First, we eliminate six of the generators.
\[
    \text{using }\rho_E(c_3): \gamma_E(a_5)=(1-t_1)\gamma_E(a_{11})+t_2\gamma_E(a_6) \]\[
    \text{using }\rho_E(c_5): t_1 \gamma_E(a_{10})= (t_2-1)\gamma_E(a_7) + \gamma_E(a_9)\]\[
    \text{using }\rho_E(c_6): \gamma_E(a_7)=(1-t_1)\gamma_E(a_9)+t_2\gamma_E(a_6) \]\[
    \text{using }\rho_E(c_8): t_1 \gamma_E(a_{12})=(t_2-1)\gamma_E(a_1) + \gamma_E(a_9)\]\[
    \text{using }\rho_E(c_{10}): t_1 \gamma_E(a_4)= (t_1-1)\gamma_E(a_6)+\gamma_E(a_5)\]\[
    \text{using }\rho_E(c_{12}): t_1 \gamma_E(a_8)=(t_1-1)\gamma_E(a_2)+\gamma_E(a_1)
    \]
    The generator $\gamma_E(a_3)$ is eliminated next.
    \begin{align*}
    & \text{using }\rho_E(c_9): \gamma_E(a_3)=(1-t_1)\gamma_E(a_2) + t_1\gamma_E(a_4)\\
    & =(1-t_1)\gamma_E(a_2) + (t_1-1)\gamma_E(a_6)+\gamma_E(a_5)\\
    & = (1-t_1)\gamma_E(a_2) + (t_1-1+t_2)\gamma_E(a_6)+(1-t_1)\gamma_E(a_{11})
    \end{align*}
    The last generator we eliminate is $\gamma_E(a_1)$.
    \begin{align*}
    & \text{using }\rho_E(c_{11}): \gamma_E(a_1)=\gamma_E(a_1)-0=\gamma_E(a_1)-\gamma_E \rho_E(c_{11})\\
    & =\gamma_E(a_1) - (1-t_1)\gamma_E(a_6)-t_1 \gamma_E(a_8)+\gamma_E(a_7)\\
    & =\gamma_E(a_1) - (1-t_1)\gamma_E(a_6) +(1-t_1)\gamma_E(a_2)-\gamma_E(a_1)+\gamma_E(a_7)\\
    &= (t_1-1+t_2)\gamma_E(a_6)+(1-t_1)\gamma_E(a_2)+(1-t_1)\gamma_E(a_9)
    \end{align*}
    The Alexander module $M_A(M)$ is defined by the four remaining generators, $\gamma_E(a_2),\gamma_E(a_6),\gamma_E(a_9)$ and $\gamma_E(a_{11})$, and the four remaining crossing relations:
    \begin{align*}
        & 0= \gamma_E \rho_E(c_1)=(1-t_2)\gamma_E(a_3)+t_1 \gamma_E(a_{12})- \gamma_E(a_{11})\\
        & = (1-t_2)((1-t_1)\gamma_E(a_2) + (t_1-1+t_2)\gamma_E(a_6)+(1-t_1)\gamma_E(a_{11}))\\
        & + (t_2-1)\gamma_E(a_1) + \gamma_E(a_9)- \gamma_E(a_{11})\\
        & = (-t_1 t_2 + t_1 + t_2)\gamma_E(a_9)+(t_1 t_2 - t_1 - t_2)\gamma_E(a_{11}) \\
        & 0= \gamma_E \rho_E(c_2)=(1-t_1)\gamma_E(a_{11})+t_2 \gamma_E(a_{2})- \gamma_E(a_3)\\
        & = (t_1+t_2-1)\gamma_E(a_{2}) - (t_1+t_2-1)\gamma_E(a_6)
        \end{align*}
        \begin{align*}
        & 0= \gamma_E \rho_E(c_4)=(1-t_2)\gamma_E(a_5)+t_1 \gamma_E(a_{10})- \gamma_E(a_{11})\\
        & = (-t_1 t_2 + t_1 + t_2)\gamma_E(a_9)+(t_1 t_2 - t_1 - t_2)\gamma_E(a_{11})\\
        & 0= \gamma_E \rho_E(c_7)=(1-t_1)\gamma_E(a_9)+t_2 \gamma_E(a_2)- \gamma_E(a_1)\\
        & = (t_1+t_2-1)\gamma_E(a_2)-(t_1+t_2-1)\gamma_E(a_6)
    \end{align*}
The third and fourth relations are redundant, so $M_A(M)$ is generated by four elements,  $\gamma_E(a_2),v=\gamma_E(a_9)-\gamma_E(a_{11}),w=\gamma_E(a_2)-\gamma_E(a_6)$ and $\gamma_E(a_9)$, subject to the relations $(t_1 + t_2 -t_1 t_2)v=0$ and $(t_1 + t_2 - 1)w=0$. That is, \begin{equation}
\label{module2}
M_A(M)\cong \Lambda_2 \oplus (\Lambda_2/(t_1 + t_2 -t_1 t_2)) \oplus (\Lambda_2/(t_1 + t_2 - 1)) \oplus \Lambda_2 \text{,}
\end{equation}
with the four summands generated by $\gamma_E(a_2),v,w$ and $\gamma_E(a_{10})$ respectively. Comparing (\ref{module1}) and (\ref{module2}), we see that $M_A(L)$ and $M_A(M)$ are non-isomorphic $\Lambda_2$-modules, and they remain non-isomorphic if the component indices in either link are reversed; but $M_A^{red}(L)$ and $M_A^{red}(M)$ are isomorphic $\Lambda$-modules. Moreover, $\phi_M(\gamma_E(a_2))=t_1-1, \phi_M(v)=0=\phi_M(w)$ and $\phi_M(\gamma_E(a_9))=t_2-1$, so the obvious isomorphism $f:M_A^{red}(L) \to M_A^{red}(M)$ has $\phi_L^{red}=\phi_M^{red} f$.

\section{Conclusion}
\label{secprob}

Here are five comments about the ideas we have presented.

1. After completing the present paper, we were able to verify that $3 \centernot \implies 2$ in Theorem \ref{main} \cite{mvaq3}. That is, the classical theory involving the Alexander module and link module sequence is genuinely improved by incorporating $Q_A(L)$. 

2. The multivariate Alexander modules of links have been thoroughly studied, but the theory is far from completely settled. For instance, it does not seem to be known whether it is possible for two links to have isomorphic Alexander modules but inequivalent link module sequences; and it does not seem to be known whether the elementary ideals of $M_A(L)$ are stronger or weaker invariants than the elementary ideals of $\ker \phi_L$. Perhaps quandle ideas can be used to address some of these outstanding problems.

3. The multivariate Alexander module is connected to many other link invariants, including the Arf invariant, elementary ideals, linking numbers, Milnor's $\bar \mu$-invariants and signatures. (An invaluable survey of many of these connections is provided by Hillman \cite{H}.) Theorem \ref{main} suggests that the connections between multivariate Alexander modules and other link invariants should extend to $Q_A(L)$ and (hence) $Q(L)$. As far as we know, the only such connection that has been explicitly described in the literature is the connection between linking numbers and quandle colorings, discussed by Harrell and Nelson \cite{HN}. 

4. $U(L)$ is much larger than $Q_A(L)$, and has many other subquandles. How are these subquandles related to each other? What do they tell us about $L$?

5. The fundamental quandles of links are complicated structures, difficult to investigate directly. A valuable strategy for dealing with this difficulty is to consider link colorings with values in smaller quandles, rather than working with fundamental quandles themselves. In the case of $Q_A(L)$, some of these quandle colorings have already been described indirectly, as colorings with domain $M_A(L)$ and values in smaller $\Lambda_{\mu}$-modules \cite{Tcol}. Perhaps these module-based colorings will be useful in addressing the issues mentioned in the first four comments.

\textbf{Acknowledgment}. We are grateful to J. A. Hillman for his informative correspondence regarding Alexander modules and link module sequences.

\end{document}